\theoremstyle{plain}
\newtheorem{theorem}{Theorem}[section]                                          
\newtheorem{lemma}[theorem]{Lemma}
\newtheorem{example}[theorem]{Example}
\newtheorem{corollary}[theorem]{Corollary}
\theoremstyle{definition}
\newtheorem{definition}[theorem]{Definition}
\theoremstyle{remark}
\newtheorem{remark}[theorem]{Remark}
\makeatletter \@addtoreset{equation}{section} \makeatother
\newcommand{\ca}[1]{\mathcal{#1}}
\newcommand{\abs}[1]{\left| #1 \right|}
\newcommand{\norm}[1]{\left\| #1 \right\|}
\newcommand{\var}{\textup{var}}
\newcommand{\ep}{\varepsilon}
\newcommand{\N}{\mathbb{N}}   % Natural numbers
\newcommand{\R}{\mathbb{R}}    % Real numbers
\newcommand{\goth}[1]{\mathfrak{#1}} % Gothic letters 
\newcommand{\quadraVari}[1]{\left\langle  #1  \right\rangle } % quadratic variation symbol
\newcommand{\fle}{\rightarrow}
\newcommand\restr[2]{{% we make the whole thing an ordinary symbol
  \left.\kern-\nulldelimiterspace % automatically resize the bar with \right
  #1 % the function
  \vphantom{\big|} % pretend it's a little taller at normal size
  \right|_{#2} % this is the delimiter
  }}
\title{Riesz spaces of signed charges on semi-rings}
\author{S. Cambronero\,\orcidlink{0000-0001-6758-4942}$^1$}
\author{D. Campos\,\orcidlink{0000-0002-3608-1151}$^2$}
\author{C. A. Fonseca-Mora\,\orcidlink{0000-0002-9280-8212}$^3$}
\author{D. Mena\,\orcidlink{0000-0002-9443-391X}$^4$}
\address{Centro de Investigaci\'{o}n en Matem\'{a}tica Pura y Aplicada \\ Escuela de Matem\'{a}tica, Universidad de Costa Rica}
\email{$^1$ santiago.cambronero@ucr.ac.cr} 
\email{$^2$ josedavid.campos@ucr.ac.cr}
\email{$^3$ christianandres.fonseca@ucr.ac.cr}
\email{$^4$ dario.menaarias@ucr.ac.cr}
\begin{document}

\emergencystretch 3em

% \keywords{set function supremum; admissibility; signed measures, Riesz spaces, Dedekind complete lattices}

\begin{abstract}
A constructive definition of the supremum of a family of set functions is exploited in the context of Riesz spaces of signed measures and finitely additive functions (signed charges) on semi-rings.  We explore applications, particularly to establish a Jordan decomposition for signed charges on semi-rings, whether the structure of Riesz space is present or not. 
\end{abstract}

% \subjclass[2020]{28A10, 28B05} 
% \keywords{supremum, signed charge, signed measure, Riesz space, Dedekind complete lattice, Banach lattice}

\maketitle

\textit{Keywords and phrases: }supremum, signed charge, signed measure, Riesz space, Dedekind complete lattice, Banach lattice.

\textit{Mathematics Subject Classification:} 28A10, 28B05.

%\tableofcontents

\section{Introduction}

Roughly speaking, a \emph{vector lattice} (or a \emph{Riesz space}) is a vector space which is also a lattice, in such a way that both structures are compatible. If moreover, the vector lattice has a norm which is compatible with the order structure (a lattice norm), and this normed space is complete, we call it a \emph{Banach lattice}. Many classical spaces of importance in modern functional analysis are vector and Banach lattices. In particular, it is well known since \cite{Alexandroff:1940} and \cite{Alexandroff:1943} that the space of bounded variation signed charges over an algebra is a Banach lattice.

Vector lattices of signed measures and charges have been around, at least, since the $1930$'s (see, for instance, \cite{Bochner:1941Phillips, Fichtenholz:1934Kantorovitch, Hildebrandt:1934, Riesz:1940}). Indeed, the starting point of the theory of vector lattices is usually attributed to the contribution of Riesz at the ICM in 1928 (see \cite{Riesz:1928}). Banach lattices were first considered by Kantorovich in \cite{Kantorovitch:1937}. Since these early developments,  the area has expanded and has developed connections with several areas of mathematics, as well as applications to other branches of science, for example, to mathematical economics and mathematical finance. \medskip

The purpose of this paper is to study some properties of Riesz spaces of signed measures and finitely additive functions (signed charges) on semi-rings. We do this by considering a constructive definition of the supremum of a family of set functions.

Despite the large literature on vector and Banach lattices (e.g. see  \cite{Alip:2006, Alip:1998, Dunford:1988Schwartz, Meyer:1991, Rao1983Rao}  among others),  few attempts have been made to generalize results to the case of signed measures and charges on semi-rings. One example is \cite{Ercan:2009}, which uses a non-constructive way for the definition of the supremum. In that work, signed measures on $\sigma$-semi-rings, and signed charges on semi-rings are considered, but these are defined as differences of measures and charges from the beginning.\medskip

%it is not very common to find an explicit construction of the supremum for a family of signed measures. 

The construction of a lattice comes down to showing that, given two elements $\mu,\nu$ in the space, there exists a third element denoted by $\mu\lor\nu$, which is the least upper bound of the pair $\{\mu,\nu\}$. For spaces of signed measures this construction has been known for almost a century. However, to prove order completeness of the space, explicit constructions have not been so popular. Interestingly enough, the idea of an explicit construction was already presented in the early forties by Riesz (see \cite{Riesz:1940}) as mentioned by \cite{Pietsch:2007Hist}. In fact, Riesz proves that the dual of Banach lattice is a  Dedekind complete Riesz space. For this, he explicitly constructs the supremum of a family of functionals over the Banach lattice. We find an explicit construction of the infimum of a family of charges over a field $\Sigma$ in \cite[Theorem 5, p. 162]{Dunford:1988Schwartz}, and the corresponding proof of the order completeness of the lattices $\textup{ba}(\Sigma)$ and $\textup{ca}(\Sigma)$ in their Corollary 6.

Vector lattice spaces of signed measures and charges are typically built over an algebra of sets. This has the advantage that every finite measure becomes of bounded variation. This advantage, however, restricts the number of examples and applications. Signed measures on semi-rings, on the other hand, can be finite or even of finite variation, without being bounded. This fact, enriches the number of situations in which they can be applied.

In Sections \ref{sectionCharges} and \ref{sectionMeasures} we present an explicit construction of the supremum of a family of set functions defined on a semi-ring, with a particular emphasis on families of signed charges (Section \ref{sectionCharges}) and of signed measures (Section \ref{sectionMeasures}) defined on a ring.  Some examples are given so the reader can get an early idea of the usefulness of these results. 

In Section \ref{sectionLattices} we explicitly construct new spaces of signed measures and charges over semi-rings, and apply such a construction to extend some classical theorems to this context.

Sections \ref{sectJordanFiniteVariation} and \ref{sectJordanGeneralCase} are dedicated to the study of the Jordan decomposition for a signed charge on a semi-ring. This is done first in Section \ref{sectJordanFiniteVariation} for finite charges of finite variation. The Jordan decomposition is obtained as a direct consequence of our results in the previous sections and the general theory of Dedekind complete Riesz spaces. In Section \ref{sectJordanGeneralCase} the Jordan decomposition is extended to signed charges which might have no finite variation, or that might even take infinite values. As the reader will see, the classical proofs are simplified by using our explicit construction of the supremum. In Section \ref{sectHahnDecomposition} we prove an $\epsilon$-Hahn decomposition for signed charges defined on a semi-ring. Our result extends the one obtained in  \cite{Rao1983Rao}.  

Finally, in Section \ref{sectApplicationsDensity} we apply our results in the previous sections to provide and explicit formula for the supremum of a family of signed measures defined by integration with respect to some extended real-valued measurable functions (see \ref{theoIntegralOfSupremum}).  This generalizes a previous result in \cite{VeraarYaroslavtsev:2016}, for measures defined as integrals of non-negative measurable functions. 

\section{The supremum of a family of signed charges}
\label{sectionCharges}

Consider a set $\Omega$ and a semi-ring $\ca{S}$ of subsets of $\Omega$. This means that $\emptyset\in\ca{S}$, $\ca{S}$ is closed under finite intersections and, given $A,B\in \ca{S}$, $A \setminus B$ is a finite disjoint union of elements of $\ca{S}$. A set function $\mu:\ca{S}\fle \overline{\R}$ is called a charge if it is (finitely) additive and avoids one of the values $+\infty$ or $-\infty$; when it is also countably additive we call it a measure. We call $\mu$ sub-additive if it satisfies
$$
\mu\left( \cup A_i \right) \leq \sum \mu(A_i)
$$
for finite families of pairwise disjoint elements of $\ca{S}$ whose union also belongs to $\ca{S}$. When this is valid for countable families we say that $\mu$ is countably sub-additive. Likewise, we define super-additive and countably super-additive measures.

Consider a family $F = (\nu_j)_{j\in J}$ of set functions defined on $\ca{S}$. We assume that $F$ satisfies the following \emph{admissibility hypothesis}:
\begin{equation}
\label{eqAdmisible}
\text{For all } A\in \ca{S}\quad \sup_{j\in J} \nu_j(A) > -\infty.
\end{equation}
For instance, if there is $j\in J$ such that $\nu_j > -\infty$ (and even more particularly, if all these set functions are finite) the family $F$ satisfies (\ref{eqAdmisible}).

\begin{example}
For every $n \in \N$ let $\alpha_n : \ca{B}(\R) \rightarrow \overline{\R}$ be defined by 
$$
\alpha_{n}(A) := Leb(A\cap [0,\infty)) - Leb(A\cap [-n,0)).
$$    
The family $(\alpha_n)_{n\geq 1}$  satisfies (\ref{eqAdmisible}).
\end{example}

\begin{definition}
Consider a family $F = (\nu_j)_{j\in J}$ of set functions defined on a semi-ring $\ca{S}$. If $F$ satisfies (\ref{eqAdmisible}), we can define a set function $\mu_F$ on $\ca{S}$ by
$$
\mu_F(A)  := \sup_{ \Pi\in \mathscr{P}(A) } \sum_{C\in \Pi}\sup_{j \in J}\nu_j(C), \qquad A \in \ca{S}
$$
where $\mathscr{P}(A)$ is the family of all finite partitions of $A$ by elements of $\ca{S}$.
\end{definition}

\begin{remark}
    Because of condition (\ref{eqAdmisible}), $\mu_F$ is well defined and $\mu_F > -\infty$.
\end{remark}

We consider the natural order relation for set functions defined on $\ca{S}$:
$$
\mu \leq \nu \Leftrightarrow \mu(A) \leq \nu(A) \text{ for all }A\in\ca{S}.
$$
This is clearly a partial order. We are especially interested in this order relation, when restricted to the set $c(\ca{S})$ of all signed charges defined on $\ca{S}$. As usual, the least upper bound of a bounded $F\subseteq c(\ca{S})$, when it exists, is denoted by $\sup F$.

\begin{theorem}
    \label{theoSupCharges}
    Let $F$ be a family of set functions defined on a semi-ring $\ca{S}$. If $F$ satisfies (\ref{eqAdmisible}), then $\mu = \mu_F$ is super-additive. Moreover, if $F \subseteq c(\ca{S})$ then $\mu_F  \in c(\ca{S})$ and $\mu_F = \sup F$.
\end{theorem}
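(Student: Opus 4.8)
The plan is to establish three things in sequence: that $\mu_F$ is super-additive in general, that it is additive (hence a charge) when $F \subseteq c(\ca{S})$, and finally that it is the least upper bound of $F$ in $c(\ca{S})$.

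First I would handle super-additivity. Let me write $g(C) := \sup_{j\in J}\nu_j(C)$, so that $\mu_F(A) = \sup_{\Pi\in\mathscr{P}(A)}\sum_{C\in\Pi} g(C)$. Suppose $A = \bigsqcup_{i=1}^n A_i$ is a finite disjoint union of elements of $\ca{S}$ with $A\in\ca{S}$. Given finite partitions $\Pi_i\in\mathscr{P}(A_i)$, their union $\bigcup_i \Pi_i$ is a finite partition of $A$ by elements of $\ca{S}$, so $\sum_i \sum_{C\in\Pi_i} g(C) \le \mu_F(A)$; taking the supremum over each $\Pi_i$ separately (which is legitimate because, by the admissibility hypothesis, each $\sum_{C\in\Pi_i} g(C) > -\infty$, so no $\infty-\infty$ issues arise) yields $\sum_i \mu_F(A_i) \le \mu_F(A)$. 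This is the easy direction and uses only that a concatenation of partitions of the pieces is a partition of the whole.

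Next, assume $F\subseteq c(\ca{S})$ and prove the reverse inequality $\mu_F(A) \le \sum_i \mu_F(A_i)$, which together with super-additivity gives additivity. Here the key point is that any partition $\Pi\in\mathscr{P}(A)$ can be refined: since $\ca{S}$ is a semi-ring, each $C\cap A_i$ is a finite disjoint union of elements of $\ca{S}$, so $\{C\cap A_i : C\in\Pi\}$ refines $\Pi$ restricted to $A_i$, and the collection of all such pieces is a partition of $A$ refining $\Pi$. The crucial sub-step is \emph{monotonicity of $\sum_{C\in\Pi} g(C)$ under refinement}, i.e. if $\Pi'$ refines $\Pi$ then $\sum_{C\in\Pi} g(C) \le \sum_{C'\in\Pi'} g(C')$. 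This follows from the fact that each $\nu_j$ is a charge: for $C = \bigsqcup_k C_k$ we have $\nu_j(C) = \sum_k \nu_j(C_k) \le \sum_k g(C_k)$, and then taking the supremum over $j$ gives $g(C) \le \sum_k g(C_k)$ — again using admissibility to rule out $\infty-\infty$. With refinement-monotonicity in hand, for any $\Pi\in\mathscr{P}(A)$ we refine it to a common refinement compatible with the partition $\{A_i\}$, split the sum over the $A_i$-blocks, and bound each block by $\mu_F(A_i)$; taking the sup over $\Pi$ finishes it. (One should also note $\mu_F > -\infty$ everywhere by the Remark, and check that $\mu_F$ avoids $+\infty$ or $-\infty$ as required of a charge — in fact since $\mu_F > -\infty$ and is additive it is automatically a charge; finiteness at specific sets is not needed for membership in $c(\ca{S})$.)

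Finally, for the lattice statement, I would show $\mu_F = \sup F$. That $\mu_F$ is an upper bound is immediate: taking the trivial partition $\Pi = \{A\}$ gives $\mu_F(A) \ge g(A) = \sup_j \nu_j(A) \ge \nu_j(A)$ for every $j$. For minimality, let $\lambda\in c(\ca{S})$ be any upper bound of $F$; then for every $C$, $\lambda(C) \ge \nu_j(C)$ for all $j$, so $\lambda(C) \ge g(C)$, and for any partition $\Pi\in\mathscr{P}(A)$, additivity of $\lambda$ gives $\lambda(A) = \sum_{C\in\Pi}\lambda(C) \ge \sum_{C\in\Pi} g(C)$; taking the supremum over $\Pi$ yields $\lambda(A)\ge\mu_F(A)$, so $\lambda \ge \mu_F$. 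I expect the main obstacle to be the bookkeeping around the extended-real arithmetic — making sure that every time partial sums are manipulated, split, or a supremum is pulled through a sum, the admissibility hypothesis \eqref{eqAdmisible} is invoked to guarantee the relevant quantities stay $> -\infty$ so that no indeterminate $\infty - \infty$ appears; and the combinatorial verification that the refinement of a partition along a semi-ring decomposition is again a finite partition by elements of $\ca{S}$.
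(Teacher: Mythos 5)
Your proposal is correct and follows essentially the same route as the paper: concatenation of partitions for super-additivity, refinement of a partition of $A$ by intersecting with the $A_i$ (your ``refinement monotonicity'' of $\sum_{C\in\Pi}\sup_j\nu_j(C)$ is exactly the paper's step $\sup_j\sum_k\nu_j(C\cap A_k)\leq\sum_k\mu(C\cap A_k)$) for sub-additivity, and comparison with an arbitrary upper bound $\lambda$ via its additivity for minimality. The only cosmetic slip is that $C\cap A_i$ lies in $\ca{S}$ directly (semi-rings are closed under finite intersections), so no appeal to the set-difference axiom is needed there.
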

\begin{proof}
Let $A_1,\ldots A_n \in \ca{S}$ be disjoint and such that $A_1\cup \ldots \cup A_n = A \in \ca{S}$. We know that $\mu(A_k) \in (-\infty,\infty]$, so let $c \in \R$ such that $c < \mu(A_1) +\ldots + \mu(A_n)$. Choose $a_k < \mu(A_k)$ for $k=1,\ldots,n$ such that $a_1+\ldots+a_n = c$. By definition, there are partitions $\Pi_k \in \mathscr{P}(A_k)$ such that
$$
a_k < \sum_{C\in \Pi_k} \sup_{j\in J} \nu_j(C),\quad k=1,\ldots,n.
$$
Since $\Pi : = \Pi_1\cup \ldots \cup \Pi_n \in \mathscr{P}(A)$, it follows that
$$
c=\sum_{k=1}^n a_k < \sum_{k=1}^n \sum_{C\in \Pi_k}  \sup_{j\in J} \nu_j(C) = \sum_{C\in \Pi}  \sup_{j\in J} \nu_j(C)\leq \mu(A).
$$
Since $c$ is arbitrary we obtain $\mu(A_1) +\ldots+ \mu(A_n) \leq \mu(A)$. Therefore, $\mu$ is super-additive. 

We now assume that $F\subseteq c(\ca{S})$. For the sub-additivity of $\mu$, consider $A_1,\ldots A_n \in \ca{S}$ be disjoint and such that $A_1\cup \ldots \cup A_n = A \in \ca{S}$.  If $\Pi\in \mathscr{P}(A)$, it follows that $C\cap A_n \in \ca{S}$ for each $C\in \Pi$. By the additivity of each $\nu_j$, and the super-additivity of $\mu_F$ one gets
$$
\sum_{C\in \Pi} \sup_{j \in J} \nu_j(C) = 
\sum_{C\in \Pi} \sup_{j \in J} \sum_{k=1}^n \nu_j(C\cap A_n) \leq 
\sum_{k=1}^n \sum_{C\in \Pi} \mu(C\cap A_n) \leq 
\sum_{k=1}^n  \mu(A_n).
$$ 
Taking the supremum over all partitions of $A$ on the left hand side, we obtain the sub-additivity for $\mu$.  Finally, if $\lambda \in c(\ca{S})$ is such that $\nu_j\leq\lambda$ for each $j\in J$ we have
$$
\mu(A) = \sup_{\Pi\in \mathscr{P}(A)}\sum_{C\in\Pi} \sup_{j\in J} \nu_j(C) \leq \sup_{\Pi\in \mathscr{P}(A)}\sum_{C\in\Pi} \lambda(C) = \lambda(A)
$$
for any $A\in \ca{S}$ and therefore $\mu \leq \lambda$.
\end{proof}

The above results can be applied to quite general situations. For instance, $\ca{S}$ could be so small that no partition other that $\Pi = \{ A \}$ exists for each set $A\in \ca{S}$.

\begin{example}
    Consider $\ca{S} = \{ \emptyset, A_1,\ldots,A_n \}$, 
    where $n>1$ and $A_1,\ldots,A_n$ form a partition of $\Omega$. Any set function $\nu$ defined on $\ca{S}$ is countably additive. In this case $\mu_F(A) = \sup_{j\in J} \nu_j(A)$.
\end{example}

The next corollary follows immediately from Theorem \ref{theoSupCharges}.

\begin{corollary}
    \label{coroSupOfFiniteSignedCharges}
    Let $F$ be a family of finite signed charges on a semi-ring $\ca{S}$, and suppose there is a finite charge $\mu$ on $\ca{S}$ that dominates each $\nu_j$.  Then $\sup F$ is a finite signed charge.
\end{corollary}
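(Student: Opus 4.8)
The plan is to derive Corollary \ref{coroSupOfFiniteSignedCharges} directly from Theorem \ref{theoSupCharges}, the only real task being to confirm that the constructed supremum $\mu_F$ is \emph{finite} under the stated hypotheses. Since each $\nu_j$ is finite, the family $F$ trivially satisfies the admissibility hypothesis (\ref{eqAdmisible}), so Theorem \ref{theoSupCharges} already tells us that $\mu_F \in c(\ca{S})$ and $\mu_F = \sup F$. It remains only to show $\mu_F(A) \in \R$ for every $A \in \ca{S}$, i.e.\ to rule out the value $+\infty$ (the value $-\infty$ being excluded by the remark following the definition of $\mu_F$).

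First I would fix $A \in \ca{S}$ and let $\Pi = \{C_1,\dots,C_m\} \in \mathscr{P}(A)$ be an arbitrary finite partition of $A$ into elements of $\ca{S}$. For each $C \in \Pi$ and each $j \in J$, the domination hypothesis gives $\nu_j(C) \leq \mu(C)$, hence $\sup_{j \in J} \nu_j(C) \leq \mu(C)$. Summing over $C \in \Pi$ and using that $\mu$ is a (finite) charge, hence additive over the finite partition $\Pi$ of $A$, I obtain
\[
\sum_{C \in \Pi} \sup_{j \in J} \nu_j(C) \leq \sum_{C \in \Pi} \mu(C) = \mu(A).
\]
Taking the supremum over all $\Pi \in \mathscr{P}(A)$ yields $\mu_F(A) \leq \mu(A) < \infty$. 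Combined with $\mu_F > -\infty$, this gives $\mu_F(A) \in \R$, so $\mu_F$ is a finite signed charge; and $\mu_F = \sup F$ by Theorem \ref{theoSupCharges}.

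I do not anticipate a genuine obstacle here: the argument is essentially the same computation that appears in the proof of Theorem \ref{theoSupCharges} when comparing $\mu_F$ to an upper bound $\lambda$. The only point requiring a moment's care is that the dominating charge $\mu$ is assumed to be \emph{finite}, which is exactly what forces $\mu(A)$ — and therefore the upper bound for $\mu_F(A)$ — to be a real number rather than merely an element of $(-\infty,\infty]$; without finiteness of $\mu$ one could only conclude $\mu_F \leq \mu$ in $c(\ca{S})$ with possibly infinite values.
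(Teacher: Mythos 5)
Your proof is correct and is essentially the argument the paper intends: the corollary is stated as an immediate consequence of Theorem \ref{theoSupCharges}, whose final step already shows $\mu_F \leq \lambda$ for any dominating charge $\lambda$, and you simply instantiate this with the finite charge $\mu$ to get $\mu_F(A) \leq \mu(A) < \infty$, while $\mu_F > -\infty$ follows from the admissibility remark. You have merely written out explicitly the details the paper leaves to the reader; nothing is missing and no different route is taken.
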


\section{The supremum of a family of signed measures}
\label{sectionMeasures}
In this section, we will need the following lemma.

\begin{lemma}
\label{lemmaMeasureSemiRing}
    Consider a charge $\mu$ defined on a semi-ring $\ca{S}$. If $\mu$ is countably sub-additive, then it is a measure.
\end{lemma}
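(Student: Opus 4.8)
The plan is to unpack the definitions: a charge is finitely additive by hypothesis, so proving it is a measure amounts to upgrading finite additivity to countable additivity. Fix pairwise disjoint sets $A_1, A_2, \dots$ in $\ca{S}$ whose union $A = \bigcup_n A_n$ again belongs to $\ca{S}$; the goal is $\mu(A) = \sum_n \mu(A_n)$. One inequality comes for free: this countable disjoint family has its union in $\ca{S}$, so it is covered by the countable sub-additivity hypothesis and $\mu(A) \le \sum_n \mu(A_n)$.

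For the reverse inequality, note that for each $N$ both $B_N := A_1 \cup \dots \cup A_N$ and $A \setminus B_N$ are finite disjoint unions of elements of $\ca{S}$ (the latter because $A \in \ca{S}$ and $\ca{S}$ is a semi-ring), so finite additivity of $\mu$ lets us speak unambiguously of $\mu(B_N) = \sum_{k \le N} \mu(A_k)$ and of $\mu(A \setminus B_N)$, and gives $\mu(A) = \mu(B_N) + \mu(A \setminus B_N)$. If $\mu(A \setminus B_N) \ge 0$ for every $N$ — i.e. if $\mu$, extended to finite disjoint unions of $\ca{S}$-sets, is monotone — then $\sum_{k \le N} \mu(A_k) \le \mu(A)$ for all $N$, and letting $N \to \infty$ gives $\sum_n \mu(A_n) \le \mu(A)$; together with the first inequality this yields countable additivity. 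The remaining cases, where some $\mu(A_n)$ or $\mu(A)$ equals $+\infty$, are immediate: since $\mu$ avoids $-\infty$, finite additivity then forces both $\mu(A)$ and $\sum_n \mu(A_n)$ to be $+\infty$. (One can also re-obtain the easy inequality here by decomposing $A \setminus B_N$ into $\ca{S}$-pieces, intersecting each with the tail sets $A_k$ for $k > N$, and applying the hypothesis, which yields $\mu(A \setminus B_N) \le \sum_{k > N} \mu(A_k)$.)

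The load-bearing step is precisely this reverse inequality, and within it the monotonicity of $\mu$ — equivalently, the non-negativity of the remainders $\mu(A \setminus B_N)$. For a non-negative charge this holds automatically, which is the situation in which the lemma gets used in Section~\ref{sectionMeasures}; but it uses non-negativity essentially, since on the algebra of subsets of $\N$ that are finite or cofinite one can easily write down a countably sub-additive signed charge that is not countably additive. I would therefore organise the write-up so that this monotonicity is the only ingredient beyond finite additivity and the sub-additivity hypothesis.
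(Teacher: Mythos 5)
Your proof is correct and follows essentially the same route as the paper's: decompose $A\setminus(A_1\cup\dots\cup A_N)$ into finitely many disjoint members of $\ca{S}$, use finite additivity plus non-negativity of the remainder to get $\sum_{k\le N}\mu(A_k)\le\mu(A)$, and combine the resulting countable super-additivity with the hypothesis. Your explicit flagging of where non-negativity enters is a genuine improvement in precision: the paper's proof silently discards the term $\sum_k\mu(D_k)$ as if it were $\ge 0$ (here ``charge'' without the qualifier ``signed'' is meant to be non-negative, which is the only case in which the lemma is invoked, namely for $\mu_F+\lambda$ in Theorem~\ref{theoSubaddSupremum}), and your finite/cofinite counterexample --- which is exactly the charge of Example~\ref{examChargeOnAlgebraOfNaturals}, easily checked to be countably sub-additive but not countably additive --- confirms that the signed version of the statement would be false.
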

\begin{proof}
 Let $(A_n)$ be a sequence of pairwise disjoint sets in $\ca{S}$ such that 
 $$
 A := \bigcup_{n=1}^\infty A_n \in \ca{S}.
 $$
By an induction argument, for each $n$ there are disjoint sets $D_1,\ldots D_m \in \ca{S}$ such that
 $$
 A \setminus \bigcup_{j=1}^n A_j = \bigcup_{k=1}^m D_k.
 $$
Since $\mu$ is a charge this gives
$$
\mu(A) = \sum_{j=1}^n \mu(A_j) + \sum_{k=1}^m \mu(D_j) \geq \sum_{j=1}^n \mu(A_j).
$$
When $n\uparrow\infty$ we obtain countable super-additivity and we are done.
\end{proof}

\begin{lemma}
\label{lemmaSubaddSupremum}
Consider a family $F = (\nu_j)_{j\in J}$ of set functions defined on a semi-ring $\ca{S}$, that satisfies (\ref{eqAdmisible}). If each $\nu_j$ is countably sub-additive, then so is $\mu_F$.
\end{lemma}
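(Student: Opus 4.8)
The plan is to bound $\mu_F(A)$ from above by $\sum_n \mu_F(A_n)$ for every pairwise disjoint sequence $(A_n)_{n\geq 1}$ in $\ca{S}$ whose union $A := \bigcup_n A_n$ also lies in $\ca{S}$. Fix an arbitrary finite partition $\Pi \in \mathscr{P}(A)$; since $\mu_F(A)$ is the supremum over all such $\Pi$ of $\sum_{C\in\Pi}\sup_{j}\nu_j(C)$, it suffices to show $\sum_{C\in\Pi}\sup_{j}\nu_j(C) \leq \sum_n \mu_F(A_n)$ and then pass to the supremum over $\Pi$.

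The first step is to refine $\Pi$ down to the scale of the $A_n$. For each $C\in\Pi$ the sets $(C\cap A_n)_n$ are pairwise disjoint elements of $\ca{S}$ whose union is $C\in\ca{S}$, so countable sub-additivity of each $\nu_j$ gives $\nu_j(C)\leq \sum_n \nu_j(C\cap A_n)\leq \sum_n \sup_{j}\nu_j(C\cap A_n)$; here the admissibility hypothesis (\ref{eqAdmisible}) ensures every term $\sup_j\nu_j(C\cap A_n)$ is $>-\infty$, so the right-hand series is a well-defined element of $(-\infty,+\infty]$ and the term-by-term comparison is legitimate. Taking the supremum over $j$ on the left yields $\sup_j\nu_j(C)\leq \sum_n\sup_j\nu_j(C\cap A_n)$. (One may instead avoid interchanging a supremum with a series by an $\ep$-argument: pick $j=j(C)$ with $\nu_{j(C)}(C)$ within $\ep$ of $\sup_j\nu_j(C)$ when this is finite, and handle the value $+\infty$ separately.)

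Summing over the finitely many $C\in\Pi$ and interchanging the finite sum over $C$ with the series over $n$ — admissible because $\Pi$ is finite and all quantities are bounded below — gives
\[
\sum_{C\in\Pi}\sup_j\nu_j(C) \;\leq\; \sum_{C\in\Pi}\sum_n\sup_j\nu_j(C\cap A_n) \;=\; \sum_n\Bigl(\sum_{C\in\Pi}\sup_j\nu_j(C\cap A_n)\Bigr).
\]
For each fixed $n$, the family $\{\,C\cap A_n : C\in\Pi\,\}$ (discarding empty sets) is a finite partition of $A_n$ by elements of $\ca{S}$, hence an element of $\mathscr{P}(A_n)$, so the definition of $\mu_F$ gives $\sum_{C\in\Pi}\sup_j\nu_j(C\cap A_n)\leq \mu_F(A_n)$. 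Substituting, $\sum_{C\in\Pi}\sup_j\nu_j(C)\leq \sum_n\mu_F(A_n)$, and taking the supremum over $\Pi\in\mathscr{P}(A)$ yields $\mu_F(A)\leq\sum_n\mu_F(A_n)$, i.e.\ countable sub-additivity of $\mu_F$.

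The only delicate point is the bookkeeping with $\overline{\R}$-valued sums and the passage of $\sup_j$ through the countable sum over $n$; both are controlled by (\ref{eqAdmisible}), which keeps every relevant quantity bounded below and so rules out any $\infty-\infty$ indeterminacy. If $\sum_n\mu_F(A_n)=+\infty$ the claim is trivial, so one may additionally assume each $\mu_F(A_n)$ is finite (it is automatically $>-\infty$ by the remark following the definition), which streamlines the convergence considerations.
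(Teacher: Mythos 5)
Your proof is correct and is essentially the paper's argument: the paper simply says to rerun the sub-additivity step from the proof of Theorem \ref{theoSupCharges} with the countable collection $(A_n)$ and the countable sub-additivity of each $\nu_j$, which is exactly what you do (refine a finite partition $\Pi$ of $A$ by the $A_n$, interchange the finite sum over $C$ with the series over $n$, and recognize $\{C\cap A_n: C\in\Pi\}$ as a finite partition of $A_n$). The only cosmetic difference is that you bound $\sum_{C\in\Pi}\sup_j\nu_j(C\cap A_n)$ by $\mu_F(A_n)$ directly from the definition of $\mu_F$, whereas the paper routes this through the super-additivity of $\mu_F$; the two are equivalent.
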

\begin{proof}
Let $(A_n)$ be a sequence of pairwise disjoint sets on $\ca{S}$ such that $A = \bigcup A_n \in \ca{S}$. We can use the same argument as in the proof of Theorem \ref{theoSupCharges}, with this countable collection and considering the countable sub-additivity of each $\nu_j$.
\end{proof}

\begin{theorem}
\label{theoSubaddSupremum}
Suppose $F\subseteq c(\ca{S})$ satisfies (\ref{eqAdmisible}), each $\nu_j$ is countably sub-additive, and there is a finite measure $\lambda$ such that $\mu_F \geq -\lambda$. Then $\mu_F$ is a signed measure.
\end{theorem}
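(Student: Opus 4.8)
The plan is to combine the three results already established: Theorem \ref{theoSupCharges} tells us that, since $F \subseteq c(\ca{S})$ and $F$ satisfies (\ref{eqAdmisible}), the set function $\mu_F$ is a charge (it is both super-additive and sub-additive, hence finitely additive, and it avoids $-\infty$ by the remark following the definition). Lemma \ref{lemmaSubaddSupremum} tells us that, since each $\nu_j$ is countably sub-additive, $\mu_F$ is also countably sub-additive. Finally, Lemma \ref{lemmaMeasureSemiRing} says that a charge on a semi-ring which is countably sub-additive is automatically a measure. So the three lemmas chain together to give that $\mu_F$ is a measure.

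The only extra work is to check that $\mu_F$ is genuinely \emph{signed} in the sense of being well-behaved: a priori $\mu_F$ could take the value $+\infty$, but it could not take $-\infty$, so it already avoids one infinite value and is a charge; the point of the hypothesis $\mu_F \geq -\lambda$ with $\lambda$ a finite measure is not needed merely to make $\mu_F$ a measure in the sense of the paper's definition. I would therefore first state plainly that $\mu_F$ is a charge (Theorem \ref{theoSupCharges}) and countably sub-additive (Lemma \ref{lemmaSubaddSupremum}), invoke Lemma \ref{lemmaMeasureSemiRing} to conclude $\mu_F$ is a measure, and then separately observe that the bound $\mu_F \geq -\lambda$ guarantees $\mu_F$ is finite-from-below in a way compatible with calling it a signed measure rather than just an extended-valued measure — in particular it shows $\mu_F$ never equals $-\infty$ on any set and, combined with additivity, that $\mu_F(A) = \mu_F(B) + \mu_F(A \setminus B)$-type manipulations causing $\infty - \infty$ cannot arise where $\lambda$ is finite.

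Concretely the steps are: (1) note $F \subseteq c(\ca{S})$ and (\ref{eqAdmisible}) hold, so Theorem \ref{theoSupCharges} gives $\mu_F \in c(\ca{S})$, i.e. $\mu_F$ is a (finitely additive) charge; (2) each $\nu_j$ is countably sub-additive, so by Lemma \ref{lemmaSubaddSupremum} $\mu_F$ is countably sub-additive; (3) apply Lemma \ref{lemmaMeasureSemiRing} to the charge $\mu_F$ to conclude it is a measure; (4) use $\mu_F \geq -\lambda$ with $\lambda$ finite to record that $\mu_F$ does not take the value $-\infty$, so it is a signed measure.

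The main obstacle is conceptual rather than technical: one must be careful about exactly which hypothesis does which job. The countable sub-additivity of the $\nu_j$ and Lemma \ref{lemmaSubaddSupremum} do all the real work in upgrading finite additivity to countable additivity via Lemma \ref{lemmaMeasureSemiRing}; the condition $\mu_F \geq -\lambda$ is a regularity/finiteness condition ensuring the resulting measure is of the right kind (signed, not merely $[-\infty,+\infty]$-valued in an unusable way). I expect the write-up to be very short — essentially a three-line chain of citations — with the only subtlety being the remark that $\mu_F$ could still take the value $+\infty$ on some sets, which is permitted for a signed measure on a semi-ring as defined in Section \ref{sectionCharges}.
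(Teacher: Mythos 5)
There is a genuine gap, and it is exactly where you dismiss the hypothesis $\mu_F \geq -\lambda$. You apply Lemma \ref{lemmaMeasureSemiRing} directly to the signed charge $\mu_F$, but that lemma (as its proof shows) only works when the tail terms are nonnegative: the key step is
$$
\mu(A) = \sum_{j=1}^n \mu(A_j) + \sum_{k=1}^m \mu(D_k) \geq \sum_{j=1}^n \mu(A_j),
$$
which requires $\mu\bigl(A\setminus \bigcup_{j\leq n} A_j\bigr)\geq 0$, and the passage to the limit $n\uparrow\infty$ requires the partial sums $\sum_{j=1}^n \mu(A_j)$ to converge --- both automatic for a nonnegative charge, both false in general for a signed one. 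Indeed, the charge of Example \ref{examChargeOnAlgebraOfNaturals} is countably sub-additive (whenever $\bigcup A_i$ is infinite, some $\mu(A_i)=+\infty$ or the right-hand side diverges to $+\infty$) yet is not a measure, so the lemma taken at face value for arbitrary signed charges, as your step (3) does, would be false. Your step (4) then assigns the hypothesis $\mu_F\geq -\lambda$ the job of ruling out the value $-\infty$, but that is already guaranteed by \eqref{eqAdmisible} (see the remark after the definition of $\mu_F$), so in your argument the hypothesis does no work at all --- a sign that its actual role has been missed.

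The paper's proof uses $\lambda$ precisely to repair this: it applies Theorem \ref{theoSupCharges}, Lemma \ref{lemmaSubaddSupremum} and Lemma \ref{lemmaMeasureSemiRing} to the \emph{shifted} set function $\mu_F+\lambda$, which is a nonnegative countably sub-additive charge (nonnegativity is exactly the hypothesis $\mu_F\geq-\lambda$, and sub-additivity of the sum is unproblematic because the terms $\mu_F(A_i)+\lambda(A_i)$ are nonnegative). The lemma then legitimately yields that $\mu_F+\lambda$ is a measure, and since $\lambda$ is a finite measure, $\mu_F=(\mu_F+\lambda)-\lambda$ is a signed measure with no $\infty-\infty$ ambiguity. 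To fix your write-up, replace steps (3)--(4) by this shift-and-subtract argument.
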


\begin{proof}
 From Theorem \ref{theoSupCharges} and Lemma \ref{lemmaSubaddSupremum}, it follows that $\mu_F + \lambda$ is a charge and it is countably sub-additive. Therefore (from Lemma \ref{lemmaMeasureSemiRing}), $\mu_F+\lambda$ is a measure, so $\mu_F$ is a signed measure. 
\end{proof}

\begin{corollary}
    \label{coroSupSignedMeasures}
    Let $(\nu_j)$ be a family of finite signed measures on a semiring $\ca{S}$. If there is a finite signed measure that dominates each $\nu_j$, then $\mu:= \sup\nu_j$ is a finite signed measure.
\end{corollary}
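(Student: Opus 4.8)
The plan is to deduce this from Theorem~\ref{theoSubaddSupremum}, after a translation that converts the one-sided domination hypothesis into the lower bound demanded there. Fix an index $j_0\in J$ (the statement is trivial when $J=\emptyset$) and let $\rho$ be a finite signed measure with $\nu_j\le\rho$ for all $j$. Three facts about $F=(\nu_j)_{j\in J}$ are immediate: condition~(\ref{eqAdmisible}) holds because each $\nu_j$ is real-valued, so $\sup_{j}\nu_j(A)\ge\nu_{j_0}(A)>-\infty$; each $\nu_j$, being a finite signed measure, is countably additive and hence countably sub-additive; and since $\rho$ is in particular a finite charge dominating every $\nu_j$, Theorem~\ref{theoSupCharges} and Corollary~\ref{coroSupOfFiniteSignedCharges} already give that $\mu=\sup F=\mu_F$ is a finite signed charge. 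What is still needed to apply Theorem~\ref{theoSubaddSupremum} is a finite measure $\lambda$ with $\mu_F\ge-\lambda$; producing such a $\lambda$ directly (for instance a negative part of $\nu_{j_0}$) would require a Jordan decomposition, which is only established later, so I would avoid it.

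Instead, I would pass to the shifted family $G=(\nu_j-\nu_{j_0})_{j\in J}$. Each $\nu_j-\nu_{j_0}$ is again a finite signed measure, hence lies in $c(\ca S)$ and is countably sub-additive; $G$ satisfies~(\ref{eqAdmisible}) because the term indexed by $j_0$ vanishes, so $\sup_j(\nu_j(A)-\nu_{j_0}(A))\ge 0$; and $G$ is dominated by $\rho-\nu_{j_0}$, a finite-valued charge. Using that $\nu_{j_0}$ is finitely additive and that every $\Pi\in\mathscr P(A)$ partitions $A$, the constant $\nu_{j_0}(C)$ factors out of the inner supremum and then out of the sum over $\Pi$, which yields the identity $\mu_{G}=\mu_F-\nu_{j_0}$. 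In particular $\mu_{G}=\sup G\ge 0$, so Theorem~\ref{theoSubaddSupremum} applies to $G$ with $\lambda$ the zero measure and shows that $\mu_{G}$ is a signed measure; moreover $0\le\mu_{G}\le\rho-\nu_{j_0}$ forces $\mu_{G}$ to be finite. (Equivalently one may bypass the citation: $\mu_{G}$ is a finite signed charge by Corollary~\ref{coroSupOfFiniteSignedCharges}, it is countably sub-additive by Lemma~\ref{lemmaSubaddSupremum}, and it is non-negative, so Lemma~\ref{lemmaMeasureSemiRing} makes it a measure.)

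Finally, $\mu=\mu_F=\mu_{G}+\nu_{j_0}$ is a sum of two finite-valued, countably additive set functions on $\ca S$, hence a finite signed measure, which is the claim. The step I expect to require the most care is the reduction itself: verifying the identity $\mu_{G}=\mu_F-\nu_{j_0}$ with full rigour --- this is exactly where finite additivity of $\nu_{j_0}$ and the partition structure of $\mathscr P(A)$ are used, and where one must check that no $\infty-\infty$ arises, which is ensured by~(\ref{eqAdmisible}) together with the finiteness of $\nu_{j_0}$ --- and confirming that the shifted family still satisfies every hypothesis of Theorem~\ref{theoSubaddSupremum}, in particular that $\rho-\nu_{j_0}$ is a genuine finite charge dominating $G$, so that $\mu_{G}$ turns out finite rather than merely $(-\infty,\infty]$-valued. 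Everything else is routine bookkeeping with the definitions of charge, measure and the constructive supremum $\mu_F$.
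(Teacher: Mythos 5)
Your proof is correct, and it follows the route the paper evidently intends (the corollary is stated immediately after Theorem~\ref{theoSubaddSupremum} with no argument supplied). You are right to flag the mismatch you work around: the corollary's hypothesis is an \emph{upper} bound $\nu_j\le\rho$, while Theorem~\ref{theoSubaddSupremum} asks for a finite \emph{measure} $\lambda$ with $\mu_F\ge-\lambda$, and one cannot simply take $\lambda=\nu_{j_0}^-$ since the Jordan decomposition comes later and the negative variation of a finite signed measure on a semi-ring need not be finite. Your shift to $G=(\nu_j-\nu_{j_0})_j$ is exactly the missing bridge: the identity $\mu_G=\mu_F-\nu_{j_0}$ checks out (each inner supremum is finite because it is squeezed between $\nu_{j_0}(C)$ and $\rho(C)$, and $\sum_{C\in\Pi}\nu_{j_0}(C)=\nu_{j_0}(A)$ factors out of the supremum over partitions), $\mu_G\ge 0$ lets you take $\lambda=0$, and $0\le\mu_G\le\rho-\nu_{j_0}$ gives finiteness. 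This is a genuine, if small, gap in the paper's exposition that your argument fills; the only cosmetic remark is that your parenthetical alternative (Corollary~\ref{coroSupOfFiniteSignedCharges} plus Lemma~\ref{lemmaSubaddSupremum} plus Lemma~\ref{lemmaMeasureSemiRing} applied to the non-negative charge $\mu_G$) is the leaner of the two closings, since it avoids re-checking the hypotheses of Theorem~\ref{theoSubaddSupremum}.
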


We finish this section by observing that, if
$$
\inf_{j\in J} \nu_j(C) < \infty, \quad \text{for all }C\in \ca{S} 
$$
we can define
$$
\inf_{j\in J} \nu_j := - \sup _{j\in J} (-\nu_j).
$$
We immediately get
$$
\left( \inf_{j\in J} \nu_j \right)(A) = \inf_{\Pi\in \mathscr{P}(A)} \sum_{C\in \Pi} \inf_{j\in J} \nu_j(C).
$$

\section{Riesz spaces of charges and measures on semi-rings}
\label{sectionLattices}

Let $\Omega$ be a set and $\ca{A}$ an algebra of subsets of $\Omega$. In  \cite{Alip:2006}, they consider the space $ ba(\ca{A})$ whose elements are finite signed charges. This is a real vector space with the natural operations
$$
(\mu+\nu)(A) = \mu(A)+\nu(A),\quad (\alpha \mu)(A) = \alpha \mu(A).
$$
Besides, the natural order completes the structure of Riesz space. All of this is proven in \cite{Alip:1998} and \cite{Alip:2006}, and furthermore, it is shown that $ba(\ca{A})$ is a Banach lattice with the norm given by $\norm{\mu} = |\mu|(\Omega)$. \medskip

In this section, we generalize these results to the case in which the family of sets is only a semi-ring $\ca{S}$, and the signed charges considered are only assumed of finite variation. More precisely, a charge $\mu$ on $\ca{S}$ is of finite variation if the charge
$$
|\mu| := \sup \{-\mu,\mu \}
$$
is finite (takes only finite values). This does not prevent it from taking arbitrarily large values.

\begin{example}[A bounded signed measure, with finite unbounded variation] \label{examChargeOnSemiringOfNaturals}

On $\Omega=\N$ consider
$$
\mathcal{S}=\{ A_{m,n}: m,n\in \N \},\quad A_{m,n} = \{ k\in \N: m \leq k < n \}.
$$
This is clearly a semi-ring. Define $\mu:  \mathcal{S} \rightarrow \R$ by 
$$
\mu(A)= \sum_{k \in A} \frac{(-1)^k}{k}.
$$
Notice that $\mu(\emptyset)=0$ and for $m<n$, $\left| \mu(A_{m,n}) \right| \leq \frac{1}{m} \leq 1.$
It is clear that $\mu$ is a bounded signed measure and its variation $|\mu|$ is given by
$$
|\mu|(A) = \sum_{n\in A}\frac{1}{n}.
$$
We conclude that $\mu$ is a bounded signed measure with finite unbounded variation.
\end{example}
The following example gives a signed charge that is not of finite variation.

\begin{example}[A finite signed charge with no finite variation] \label{examChargeOnAlgebraOfNaturals}
This example is based on Example 10.7 in \cite{Alip:2006}. On $\Omega=\N$ consider the algebra 
$$
\mathcal{A}=\{ A \subseteq \N: A \mbox{ or } A^{c} \mbox{ is finite } \}.
$$
Define $\mu:  \mathcal{A} \rightarrow \R$ by 
$$\mu(A)=
\begin{cases}
\text{card}(A) & \mbox{if }A\text{ is finite} \\
-\text{card}(A^{c}) & \mbox{otherwise}. 
\end{cases}
$$
It is not difficult to check that $\mu$ is a signed charge on $\mathcal{A}$, but not a signed measure. For example, 
% since for instance
 $$
\mu\left( \bigcup_{n=3}^\infty \{ n\} \right) = -2 \neq \sum_{n=3}^\infty \mu(\{ n\}).$$
Even though $\mu$ has unbounded range, it is a finite signed charge. However, 
 $$
 |\mu|(\N) \geq |\mu\left( \{ 1,2, \dots, n\} \right)| + |\mu\left( \{n+1, n+2, \dots \} \right)| = 2n
 $$
for every $n\in\N$, so $|\mu|(\N)=\infty$. Even more, 
$|\mu|(A) = \infty$ whenever $A^c$ is finite. Hence, we have shown that $\mu$ is a finite charge but not of finite variation.
\end{example}

\subsection{The space of finite variation signed charges over a semi-ring}\ 

We denote by $\textup{fa}(\ca{S})$ the space of finite variation signed charges defined on $\ca{S}$. More precisely
$$
\textup{fa}(\ca{S}) := \{ \mu:\ca{S}\fle \R : \mu \text{ is a signed charge and } |\mu| \text{ is finite} \}.
$$
For instance, the signed charge of Example \ref{examChargeOnSemiringOfNaturals} belongs to $\textup{fa}(\ca{S})$, but the one of Example \ref{examChargeOnAlgebraOfNaturals} does not. Notice that, in general, if $\mu\in \textup{fa}(\ca{S})$ then $|\mu| \in \textup{fa}(\ca{S})$, because $|\mu|$ is its own variation. 
The space $\textup{fa}(\ca{S})$ is a Riesz space with the usual operations and usual order. In fact, given $\mu,\nu\in \textup{fa}(\ca{S})$ we have $\mu,\nu \leq |\mu|+|\nu|$ and by Corollary \ref{coroSupOfFiniteSignedCharges}, $\mu \lor \nu := \sup \{\mu,\nu\}$ is a finite signed charge satisfying
$$
\mu \leq \mu \lor \nu \leq |\mu|+|\nu|. 
$$
Thanks to Corollary \ref{coroSupOfFiniteSignedCharges}, every nonempty upper bounded subset of $\textup{fa}(\ca{S})$ has a supremum, which belongs to $\textup{fa}(\ca{S})$. This shows the following.

\begin{theorem}
\label{teoRieszSpaceCharges}
  The space $\textup{fa}(\ca{S})$ is a Dedekind complete Riesz space.
\end{theorem}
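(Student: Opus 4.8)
The plan is to verify the axioms of a Dedekind complete Riesz space for $\textup{fa}(\ca{S})$, leveraging the work already done. First I would establish that $\textup{fa}(\ca{S})$ is a vector space: it is closed under scalar multiplication since $|\alpha\mu| = |\alpha|\,|\mu|$, and closed under addition because $|\mu+\nu| \leq |\mu| + |\nu|$ (an inequality that follows from $\pm(\mu+\nu) \leq |\mu| + |\nu|$ together with the least-upper-bound property of the supremum, using Corollary \ref{coroSupOfFiniteSignedCharges} to ensure the relevant suprema exist as finite charges). Next I would confirm the lattice structure: given $\mu,\nu \in \textup{fa}(\ca{S})$, the bound $\mu,\nu \leq |\mu|+|\nu|$ shows the pair is upper bounded by a finite charge, so by Corollary \ref{coroSupOfFiniteSignedCharges} the supremum $\mu\lor\nu$ exists, is a finite signed charge, and satisfies $\mu \leq \mu\lor\nu \leq |\mu|+|\nu|$; since $|\mu\lor\nu|$ is then squeezed below $|\,|\mu|+|\nu|\,| = |\mu|+|\nu|$ (which is finite), we get $\mu\lor\nu \in \textup{fa}(\ca{S})$. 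The infimum is obtained dually via $\mu\land\nu = -((-\mu)\lor(-\nu))$.

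Then I would check compatibility of the order with the vector operations: $\mu \leq \nu$ implies $\mu + \rho \leq \nu + \rho$ for all $\rho$, and $\mu \leq \nu$ implies $\alpha\mu \leq \alpha\nu$ for $\alpha \geq 0$ — both are immediate from the pointwise-in-$A$ definition of the order. Together with the lattice property just established, this makes $\textup{fa}(\ca{S})$ a Riesz space. For Dedekind completeness, I would take an arbitrary nonempty subset $G \subseteq \textup{fa}(\ca{S})$ that is bounded above by some $\lambda \in \textup{fa}(\ca{S})$, i.e. $\mu \leq \lambda$ for all $\mu \in G$. The family $G$ trivially satisfies the admissibility hypothesis (\ref{eqAdmisible}) since each element is finite, and $\lambda$ dominates every element; hence Corollary \ref{coroSupOfFiniteSignedCharges} applies to give $\sup G \in c(\ca{S})$ finite, with moreover $\sup G \leq \lambda$. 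To conclude $\sup G \in \textup{fa}(\ca{S})$, I would note $G$ is also bounded below, say by $-|\mu_0| - |\lambda|$ for any fixed $\mu_0 \in G$ (using $-|\mu_0| \leq \mu_0 \leq \sup G$), which together with $\sup G \leq \lambda \leq |\lambda|$ squeezes $\sup G$ between two elements of $\textup{fa}(\ca{S})$, forcing $|\sup G|$ to be finite.

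I do not anticipate a serious obstacle here: the substantive content — that the constructively defined supremum $\mu_F$ really is the least upper bound and really is a finite signed charge under a domination hypothesis — is exactly Theorem \ref{theoSupCharges} and Corollary \ref{coroSupOfFiniteSignedCharges}, which I am entitled to use. The only point requiring a little care is confirming that membership in $\textup{fa}(\ca{S})$ (finiteness of the variation) is preserved under suprema; the ``squeezing'' argument above — any charge sandwiched between two finite-variation charges has finite variation, since the variation is monotone on order intervals in the appropriate sense — handles this cleanly. Everything else is the routine verification of the Riesz-space axioms from the pointwise order, which the preceding discussion in the excerpt has already sketched.
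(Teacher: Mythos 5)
Your argument is correct and follows essentially the same route as the paper: the lattice operations and Dedekind completeness are both obtained from Corollary \ref{coroSupOfFiniteSignedCharges} together with the domination $\mu,\nu \leq |\mu|+|\nu|$ (resp.\ the upper bound $\lambda$), and finiteness of the variation of the supremum is settled by the same sandwiching between elements of $\textup{fa}(\ca{S})$. The extra details you supply (vector-space closure, order compatibility, and the explicit lower bound $-|\mu_0|-|\lambda|$ for an arbitrary bounded family) are exactly the routine verifications the paper leaves implicit.
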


\subsection{The space of finite variation signed measures over a semi-ring}\ 

We denote $\textup{ca}(\ca{S})$ the space of all finite variation signed measures defined on the semi-ring $\ca{S}$. More precisely
$$
\textup{ca}(\ca{S}) := \{ \mu:\ca{S}\fle \R : \mu \text{ is a signed measure and } |\mu| \text{ is finite} \}.
$$
Similarly to the case of $\textup{fa}(\ca{S})$, whenever $\mu\in \textup{ca}(\ca{S})$ we have $|\mu| \in \textup{ca}(\ca{S})$. Besides, the space $\textup{ca}(\ca{S})$ is a Riesz space with the usual operations and usual order. Thanks to Corollary \ref{coroSupSignedMeasures}, every nonempty upper bounded subset of $\textup{ca}(\ca{S})$ has a supremum, which belongs to $\textup{ca}(\ca{S})$. This shows the following.

\begin{theorem}
\label{teoremaRieszSpaceMeasures}
  The space $\textup{ca}(\ca{S})$ is a Dedekind complete Riesz space.
\end{theorem}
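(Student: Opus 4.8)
The plan is to follow verbatim the pattern used for $\textup{fa}(\ca{S})$ in Theorem \ref{teoRieszSpaceCharges}, replacing each appeal to Corollary \ref{coroSupOfFiniteSignedCharges} by the corresponding Corollary \ref{coroSupSignedMeasures} for measures. First I would record the algebraic facts, most of which are already observed just before the statement: if $\mu,\nu\in\textup{ca}(\ca{S})$ and $\alpha\in\R$, then $\mu+\nu$ and $\alpha\mu$ are again real-valued countably additive set functions, and $|\mu+\nu|\leq|\mu|+|\nu|$, $|\alpha\mu|=|\alpha|\,|\mu|$ are finite; hence $\textup{ca}(\ca{S})$ is a vector subspace of the set functions on $\ca{S}$. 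Since the order $\mu\leq\nu$ is defined pointwise, it is immediately compatible with the vector structure (translation invariant, and preserved by multiplication by nonnegative scalars), so it only remains to check the lattice property and Dedekind completeness.

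For the lattice property, fix $\mu,\nu\in\textup{ca}(\ca{S})$. As $|\mu|,|\nu|\in\textup{ca}(\ca{S})$, their sum $\rho:=|\mu|+|\nu|$ is again a finite signed measure, and $\mu\leq\rho$, $\nu\leq\rho$. Applying Corollary \ref{coroSupSignedMeasures} to the two-element family $\{\mu,\nu\}$ dominated by $\rho$, the supremum $\mu\lor\nu:=\sup\{\mu,\nu\}$ is a finite signed measure, i.e. it lies in $\textup{ca}(\ca{S})$; the corresponding infimum is then $\mu\land\nu=-\bigl((-\mu)\lor(-\nu)\bigr)\in\textup{ca}(\ca{S})$. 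Thus $\textup{ca}(\ca{S})$ is a sublattice of $c(\ca{S})$, hence a Riesz space.

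For Dedekind completeness, let $F=(\nu_j)_{j\in J}\subseteq\textup{ca}(\ca{S})$ be nonempty and bounded above by some $\lambda\in\textup{ca}(\ca{S})$. Each $\nu_j$ is a finite signed measure dominated by the finite signed measure $\lambda$, so Corollary \ref{coroSupSignedMeasures} yields that $\mu_F=\sup_{j}\nu_j$ is a finite signed measure, hence belongs to $\textup{ca}(\ca{S})$. (The admissibility hypothesis (\ref{eqAdmisible}) needed to make sense of $\mu_F$ is automatic here: since $F$ is nonempty, $\sup_{j}\nu_j(A)\geq\nu_{j_0}(A)>-\infty$ for any fixed $j_0$.) Finally, the order on $\textup{ca}(\ca{S})$ is the restriction of the order on $c(\ca{S})$ and, by Theorem \ref{theoSupCharges}, $\mu_F=\sup F$ already holds in $c(\ca{S})$; since $\mu_F\in\textup{ca}(\ca{S})$, it is a fortiori the least upper bound of $F$ inside $\textup{ca}(\ca{S})$.

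I do not expect a serious obstacle in this argument; the only points needing (minor) care are verifying that the proposed dominating element $|\mu|+|\nu|$ really belongs to $\textup{ca}(\ca{S})$ — i.e. that a finite sum of finite measures is a finite measure — and observing that the supremum computed in the ambient space of all set functions cannot fail to be the supremum relative to the subspace $\textup{ca}(\ca{S})$, which is immediate because every upper bound of $F$ lying in $\textup{ca}(\ca{S})$ is in particular an upper bound of $F$ in $c(\ca{S})$.
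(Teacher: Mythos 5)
Your argument is correct and is essentially the paper's own proof, which likewise just invokes Corollary \ref{coroSupSignedMeasures} (in parallel with the $\textup{fa}(\ca{S})$ case) to get that every nonempty order-bounded subset has a supremum lying in $\textup{ca}(\ca{S})$. The only point both you and the paper leave implicit is that the supremum has \emph{finite variation}, not merely finite values; this follows from the sandwich $\nu_{j_0}\leq\mu_F\leq\lambda$, which gives $|\mu_F|\leq|\nu_{j_0}|+|\lambda|$.
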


\subsection{Bounded variation signed charges and measures}\ 

Consider the space 
$$
\textup{ba}(\ca{S}) := \{ \mu\in \textup{fa}(\ca{S}): |\mu| \text{ is bounded} \}
$$
and define 
$$
\norm{\mu} := \sup_{A\in \ca{S}} |\mu|(A).
$$
This is clearly a norm and $\textup{ba}(\ca{S})$ is norm complete. The norm-completeness follows as in the proof of Theorem $10.53$ of \cite{Alip:2006}. 

\begin{remark}
    If $\Omega \in \ca{S}$, according to Definition $9.26$ in \cite{Alip:2006}, $\textup{ba}(\ca{S})$ is an AL-space. In fact, for $\mu \geq 0$ and $\nu\geq 0$  we have $
\norm{\mu+\nu} = \mu(\Omega)+\nu(\Omega) = \norm{\mu} + \norm{\nu}.$  It follows from the theorem of Kakutani (Theorem $9.33$ in \cite{Alip:2006}) that $\textup{ba}(\ca{S})$ is lattice isometric to some $L_1(\mu)$-space. 
\end{remark}

\subsection{Projection band}\

Given a Riesz space $E$, a vector subspace of $E$ is called Riesz subspace if it is closed under lattice operations. Equivalently, a vector subspace $F$ is a Riesz subspace if $|\mu|\in F$ whenever $\mu\in F$. It follows directly from the definition, that $\textup{ca}(\ca{S})$ is a Riesz subspace of $\textup{fa}(\ca{S})$. If a vector subspace $F$ is also solid ($|x|\leq |y|$ and $y\in F$ implies $x\in F$), it is called a an ideal. It is clear that every ideal on $E$ is a Riesz subspace of $E$. Thanks to Theorem 8.13 in \cite{Alip:2006}, a Riesz subspace $F$ is an ideal if and only if
$$
0\leq x\leq y, y\in F \Rightarrow x\in F.
$$
Finally, an ordered closed ideal is called a band. We now extend Theorem 10.56 of \cite{Alip:2006}.

\begin{theorem}
For a semi-ring $\mathcal{S}$, $\textup{ca}(\ca{S})$ is a projection band on $\textup{fa}(\ca{S})$. In particular, we have the decomposition 
$$
\textup{fa}(\mathcal{S}) = \textup{ca}(\mathcal{S})\oplus  \textup{ca}(\mathcal{S})^d
$$
\end{theorem}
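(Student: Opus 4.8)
The plan is to verify three things in turn: that $\textup{ca}(\ca{S})$ is an ideal of $\textup{fa}(\ca{S})$, that it is in fact a band, and then to quote the classical fact that a band in a Dedekind complete Riesz space is always a projection band; this mirrors the scheme of Theorem~10.56 in \cite{Alip:2006}. Theorem~\ref{teoRieszSpaceCharges} gives the Dedekind completeness of $\textup{fa}(\ca{S})$, and we already observed that $\textup{ca}(\ca{S})$ is a Riesz subspace of $\textup{fa}(\ca{S})$, so once the band property is in hand the direct sum decomposition $\textup{fa}(\ca{S})=\textup{ca}(\ca{S})\oplus\textup{ca}(\ca{S})^d$ comes for free (see \cite{Alip:2006}).

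For the ideal property I would use the criterion recalled above (Theorem~8.13 in \cite{Alip:2006}): it suffices to show that if $\mu\in\textup{fa}(\ca{S})$, $\nu\in\textup{ca}(\ca{S})$ and $0\le\mu\le\nu$, then $\mu$ is a measure (it is already a finite charge, so it will then lie in $\textup{ca}(\ca{S})$). Given pairwise disjoint $(A_n)\subseteq\ca{S}$ with $A:=\bigcup_n A_n\in\ca{S}$, finite additivity of $\mu$ already yields $\mu(A)\ge\sum_n\mu(A_n)$, exactly as in Lemma~\ref{lemmaMeasureSemiRing}. For the reverse inequality one writes, using the semi-ring structure, $A\setminus\bigcup_{j\le n}A_j$ as a finite disjoint union of sets $B_{n,i}\in\ca{S}$, so that $\mu(A)=\sum_{j\le n}\mu(A_j)+\sum_i\mu(B_{n,i})$ and similarly for $\nu$; since $\nu$ is a finite measure, $\sum_j\nu(A_j)=\nu(A)<\infty$, hence $\sum_i\nu(B_{n,i})=\nu(A)-\sum_{j\le n}\nu(A_j)\to 0$, and domination $0\le\mu(B_{n,i})\le\nu(B_{n,i})$ then forces $\sum_i\mu(B_{n,i})\to 0$, so $\mu(A)=\sum_n\mu(A_n)$. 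I expect this to be the main obstacle: it is the one place where the combinatorics of the semi-ring and, crucially, the domination by a genuine \emph{measure} (not merely a charge) must be combined, and it is the countable additivity of $\nu$ that makes the remainder terms vanish.

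For the band property I would invoke the standard characterization that an ideal of a Riesz space is a band precisely when it is stable under suprema, taken in the ambient space, of its upward directed positive subsets (\cite{Alip:2006}). So let $F=(\nu_j)_{j\in J}\subseteq\textup{ca}(\ca{S})^{+}$ be upward directed and suppose $\mu:=\sup F$ exists in $\textup{fa}(\ca{S})$. Then $F$ satisfies~(\ref{eqAdmisible}) trivially, so by Theorem~\ref{theoSupCharges} its least upper bound in $c(\ca{S})$ is $\mu_F$; comparing $0\le\nu_{j_0}\le\mu_F$ with the finite $\mu$ shows $\mu_F$ is finite-valued, hence $\mu_F\in\textup{fa}(\ca{S})$ and $\mu=\mu_F$. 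Each $\nu_j$, being a signed measure, is countably sub-additive, so Lemma~\ref{lemmaSubaddSupremum} makes $\mu_F$ countably sub-additive, and $\mu_F\ge 0$ gives $\mu_F\ge-\lambda$ for the (finite) zero measure $\lambda$; Theorem~\ref{theoSubaddSupremum} then says $\mu_F$ is a signed measure, so $\mu=\mu_F\in\textup{ca}(\ca{S})$. Thus $\textup{ca}(\ca{S})$ is a band, and since $\textup{fa}(\ca{S})$ is Dedekind complete it is a projection band, yielding the asserted decomposition.
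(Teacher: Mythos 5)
Your proposal is correct and follows essentially the same route as the paper: establish the ideal property via the semi-ring decomposition of $A\setminus\bigcup_{j\le n}A_j$ and domination by the countably additive majorant, then get the band property from closure under directed suprema (the paper invokes Corollary \ref{coroSupSignedMeasures}, which packages the same appeal to Theorem \ref{theoSubaddSupremum} that you spell out), and finally use Dedekind completeness of $\textup{fa}(\ca{S})$ to upgrade the band to a projection band.
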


\begin{proof}
By Theorem 8.20 in \cite{Alip:2006}, it is enough to show that $\textup{ca}(\mathcal{S})$ is a band. To prove that $\textup{ca}(\mathcal{S})$ is an ideal, by Theorem 8.13 in \cite{Alip:2006}, it suffices to show that for $\nu \in \textup{fa}(\mathcal{S})$ such that $0 \leq \nu \leq \mu$, with $\mu \in \textup{ca}(\mathcal{S})$, then $\nu \in \textup{ca}(\mathcal{S})$.   Let $(A_n)_{n \in \mathbb{N}}$ a sequence of disjoints sets in $\mathcal{S}$ such that $A = \bigcup_{n = 1}^{\infty} A_n \in \mathcal{S}$.   Since we can write $A = \left( \bigcup_{k = 1}^n A_k\right) \cup \left(\bigcup_{j=1}^m D_j \right)$, with $D_1, \ldots, D_m \in \mathcal{S}$ disjoint, then
$$
0 \leq \nu(A) - \sum_{k=1}^n \nu(A_k) = \sum_{j=1}^m \nu(D_j) \leq  \sum_{j=1}^m \mu(D_j) = \mu(A) - \sum_{k=1}^n \mu(A_k),
$$
and the right hand side goes to zero as $n$ goes to infinity.  This shows that $\nu \in \textup{ca}(\mathcal{S})$. 

To conclude that $\textup{ca}(\mathcal{S})$ is a band, following the arguments in section 8.9 in \cite{Alip:2006}, it is enough to show that if $\{ \nu_{\alpha} \} \subseteq \textup{ca}(\mathcal{S})$ and $0 \leq \nu_{\alpha} \uparrow \nu$, then $\nu \in \textup{ca}(\mathcal{S})$, but this is an immediate consequence of Corollary \ref{coroSupSignedMeasures} since $\nu = \sup_{\alpha} \nu_{\alpha} \in \textup{ca}(\mathcal{S})$.
\end{proof}

\section{Jordan decomposition as a consequence of the structure of Riesz space}\label{sectJordanFiniteVariation}

We exploit results of last section and the general theory of Dedekind complete Riesz spaces, to simplify proofs and extend some classical theorems. We start with a Jordan decomposition theorem for signed charges and signed measures of finite variation on semi-rings.

\begin{theorem}
\label{teoJordanSemirings}
    Let $\mu$ be a signed charge of finite variation on a semiring $\ca{S}$. There are unique finite charges $\mu^+$ and $\mu^-$ such that 
    $$
    \mu = \mu^+ - \mu^-,\quad \mu^+\land \mu^- = 0.
    $$
    They are given by $\mu^+ := \mu \lor 0$ and $\mu^- := (\mu)\lor 0$. We also have $|\mu| = \mu^+ + \mu^-$. If $\mu$ is a signed measure, then $\mu^+$ and $\mu^-$ are measures.
\end{theorem}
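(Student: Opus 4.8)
The plan is to observe that, apart from the last assertion, the statement is nothing but the abstract Jordan decomposition available in an arbitrary Riesz space, applied to the space $\textup{fa}(\ca{S})$, which is a (Dedekind complete) Riesz space by Theorem~\ref{teoRieszSpaceCharges}. Concretely, I would first put $\mu^+:=\mu\lor 0$ and $\mu^-:=(-\mu)\lor 0$, where the lattice operations are taken in $\textup{fa}(\ca{S})$. These make sense: the pairs $\{\mu,0\}$ and $\{-\mu,0\}$ are each dominated in $\textup{fa}(\ca{S})$ by the finite charge $|\mu|$ (indeed $0\leq|\mu|$ and $\mu\leq|\mu|$, and likewise $-\mu\leq|\mu|$), so Corollary~\ref{coroSupOfFiniteSignedCharges} gives $\mu^+,\mu^-\in\textup{fa}(\ca{S})$; being finite and $\geq 0$, each of them is a finite charge. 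One should also record here, via Theorem~\ref{theoSupCharges}, that the lattice supremum in $\textup{fa}(\ca{S})$ agrees with the supremum in the ordered set $c(\ca{S})$, i.e. with the constructive $\mu_F$ of Section~\ref{sectionCharges}; this is the only place where attention is needed regarding which ``$\lor$'' is meant.

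Next I would invoke the standard Riesz-space identities. From $a\lor b=\tfrac12(a+b+|a-b|)$, $a\land b=\tfrac12(a+b-|a-b|)$ and $|a|=a\lor(-a)$ one obtains in any Riesz space $x=x^+-x^-$, $x^+\land x^-=0$ and $|x|=x^++x^-$. Applying these with $x=\mu$ in $\textup{fa}(\ca{S})$ yields at once $\mu=\mu^+-\mu^-$, $\mu^+\land\mu^-=0$ and $|\mu|=\mu^++\mu^-$, the last being coherent with the fact that $|\mu|$ was defined as $\sup\{-\mu,\mu\}$. If $\mu$ is a signed measure of finite variation, then $\mu\in\textup{ca}(\ca{S})$, and since $\textup{ca}(\ca{S})$ is a Riesz subspace of $\textup{fa}(\ca{S})$ (equivalently, since $|\mu|$ is a finite signed measure dominating $\{\mu,0\}$ and $\{-\mu,0\}$, so Corollary~\ref{coroSupSignedMeasures} applies) we get $\mu^+=\mu\lor 0\in\textup{ca}(\ca{S})$ and $\mu^-=(-\mu)\lor 0\in\textup{ca}(\ca{S})$; being $\geq 0$, they are finite measures.

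For uniqueness, suppose $\mu=\alpha-\beta$ with $\alpha,\beta$ finite charges and $\alpha\land\beta=0$. Since $\alpha,\beta\geq 0$ are finite, their variations are themselves, hence $\alpha,\beta\in\textup{fa}(\ca{S})$ and the infimum $\alpha\land\beta$ refers to this Riesz space; thus uniqueness among all finite charges reduces to uniqueness in $\textup{fa}(\ca{S})$. From $\alpha\geq\mu$ (because $\beta\geq 0$) and $\alpha\geq 0$ one gets $\alpha\geq\mu\lor 0=\mu^+$, so $\alpha-\mu^+\geq 0$; and since $\mu^+-\mu^-=\mu=\alpha-\beta$ we have $\alpha-\mu^+=\beta-\mu^-$. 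As $\mu^+,\mu^-\geq 0$, this common element satisfies $\alpha-\mu^+\leq\alpha$ and $\beta-\mu^-\leq\beta$, whence $0\leq\alpha-\mu^+\leq\alpha\land\beta=0$. Therefore $\alpha=\mu^+$ and, consequently, $\beta=\mu^-$.

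I do not expect a genuine obstacle here: the whole argument is a corollary of the Riesz-space machinery assembled in the previous sections. The only delicate bookkeeping is (i) being explicit, via Theorem~\ref{theoSupCharges}, that the symbols $\lor,\land$ always denote the same lattice operations as the constructive suprema/infima, and (ii) checking in the uniqueness step that the competitors $\alpha,\beta$ actually lie in $\textup{fa}(\ca{S})$ so that the lattice identities apply — both of which are immediate once pointed out.
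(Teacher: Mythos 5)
Your proposal is correct and follows essentially the same route as the paper: the paper simply cites Theorems \ref{teoRieszSpaceCharges} and \ref{teoremaRieszSpaceMeasures} together with the abstract Jordan decomposition for Riesz spaces (Theorems 8.6 and 8.11 in \cite{Alip:2006}), which is exactly the machinery you unpack, including the uniqueness argument. Your explicit verification that the lattice operations agree with the constructive suprema of Section \ref{sectionCharges}, and that the competitors $\alpha,\beta$ lie in $\textup{fa}(\ca{S})$, is the same bookkeeping the paper leaves implicit.
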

This follows immediately from Theorems \ref{teoRieszSpaceCharges}, \ref{teoremaRieszSpaceMeasures} and Theorems $8.6$ and $8.11$ in \cite{Alip:2006}.

\begin{example}[A bounded signed measure with finite unbounded variation on a semi-ring]

Let $\Omega=[0,1)$ and consider the semi-ring $\mathcal{A}=\{ [a,b): 0 < a \leq b \leq 1\}$. Define $\mu:  \mathcal{A} \rightarrow \R$ by 
$$
\mu([a,b))=\sum_{n=1}^{\infty} (-1)^{n+1}(n+1) \mbox{Leb} \left( [a,b) \cap \left[ \frac{1}{n+1}, \frac{1}{n} \right) \right). 
$$
Since $a>0$, this sum always has a finite number of terms. One can easily check that $-\frac{1}{2} \leq \mu[a,b) \leq 1$. Suppose
$$
[a,b) = \bigcup_{n=1}^\infty [a_n,b_n) \in \ca{S}.
$$
Since $a>0$ and $\mu$ is a difference of two finite measures on $[a,b)$, we obtain 
$$
\mu[a,b) = \sum_{n=1}^\infty \mu[a_n,b_n).
$$
Observe that $\mu$ is of finite (though unbounded) variation since 
$$ 
\mu^{+} \left( \left[ \frac{1}{2n}, 1 \right) \right) = \sum_{k=1}^n \frac{1}{2k-1}, \quad
\mu^{-} \left( \left[ \frac{1}{2n}, 1 \right) \right) = \sum_{k=1}^n \frac{1}{2k}.
$$
We conclude that $\mu \in \textup{ca}(\ca{S})$ and $|\mu|$ is an unbounded finite measure on $\ca{S}$.
\end{example}

\section{Jordan decomposition for general signed charges on semi-rings}
\label{sectJordanGeneralCase}

In last section, we took advantage of properties of the supremum of set functions and the general theory of complete Riesz spaces. The structure of Riesz space requires the variation of its elements to be finite, because the variation itself is a supremum of two elements of the space. But we can extend results from last section to more general contexts, where the lattice structure is not present. Results on this section can be seen as extending Proposition $2.5.3$ of \cite{Rao1983Rao} to the case of a semi-ring as underlying family.\medskip

We start with a version of the Jordan decomposition theorem for finite signed measures and charges on semi-rings, regardless of the variation.

\begin{theorem}
\label{teoJordanFiniteSigned}
    Let $\mu$ be a finite signed charge on a semiring $\ca{S}$. The charges $\mu^+ := \mu\lor 0$ and $\mu^- := (-\mu)\lor 0$ satisfy
    \begin{equation}
      \label{eqJordanImplicito}
      \mu^+ = \mu + \mu^-,\quad |\mu| = \mu^+ + \mu^-.
    \end{equation}
    If $\mu$ is a finite signed measure, then $\mu^+$ and $\mu^-$ are measures. If one of the charges $\mu^+$ or $\mu^-$ is finite, then so is the other and Theorem \ref{teoJordanSemirings} applies.
\end{theorem}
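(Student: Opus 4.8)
The plan is to work directly with the explicit formula for the supremum from Section~\ref{sectionCharges}, using only finite additivity of $\mu$ (which forces $\mu$ to avoid at least one infinite value, and being finite it avoids both). First I would recall that since $\mu$ is a finite signed charge, the family $F=\{\mu,0\}$ trivially satisfies the admissibility hypothesis~(\ref{eqAdmisible}), and likewise $\{-\mu,0\}$ does. Hence $\mu^+=\mu\lor 0$ and $\mu^-=(-\mu)\lor 0$ exist as super-additive set functions given by Theorem~\ref{theoSupCharges}; in fact, inspecting the proof of that theorem, the only place finiteness of the dominating charge was used was to conclude $\mu_F\in c(\ca{S})$ and sub-additivity — super-additivity holds unconditionally, and moreover $\mu^+,\mu^-\ge 0$ and $\mu^+,\mu^-\ge\mu,-\mu$ respectively, so in particular they never take the value $-\infty$, while they may take $+\infty$.

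The core identity to establish is $\mu^+=\mu+\mu^-$ pointwise on $\ca{S}$. For a fixed $A\in\ca{S}$, I would show both inequalities directly from the partition formula. For ``$\le$'': given a partition $\Pi\in\mathscr P(A)$, on each cell $C$ one has $\max\{\mu(C),0\}=\mu(C)+\max\{-\mu(C),0\}$ since $\mu(C)\in\R$; summing over $C\in\Pi$ and using finite additivity of $\mu$ gives $\sum_{C\in\Pi}\max\{\mu(C),0\}=\mu(A)+\sum_{C\in\Pi}\max\{-\mu(C),0\}\le \mu(A)+\mu^-(A)$, and taking the supremum over $\Pi$ yields $\mu^+(A)\le\mu(A)+\mu^-(A)$. (Here I should be careful that $\mu(A)$ is finite so adding it is legitimate, and that $\mu^-(A)$ may be $+\infty$, in which case the inequality is trivial.) For ``$\ge$'': symmetrically, from a partition witnessing a value close to $\mu^-(A)$ one rearranges $\mu(C)+\max\{-\mu(C),0\}=\max\{\mu(C),0\}$ and sums, obtaining $\mu(A)+\sum_{C\in\Pi}\max\{-\mu(C),0\}\le\mu^+(A)$; since this holds for every $\Pi$, passing to the supremum gives $\mu(A)+\mu^-(A)\le\mu^+(A)$. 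Combining, $\mu^+=\mu+\mu^-$. The identity $|\mu|=\mu^+ +\mu^-$ then follows because $|\mu|=\sup\{-\mu,\mu\}=\mu\lor(-\mu)$, and applying the same rearrangement $\max\{\mu(C),-\mu(C)\}=\max\{\mu(C),0\}+\max\{-\mu(C),0\}$ cell by cell (all finite) and taking suprema over partitions gives $|\mu|(A)=\mu^+(A)+\mu^-(A)$; alternatively, once $\mu^+=\mu+\mu^-$ is known one can add $\mu^+$ to both sides of the symmetric relation.

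For the measure case, if $\mu$ is a finite signed measure then it is countably additive, hence countably sub-additive (indeed countably additive set functions are both sub- and super-additive on semi-rings); so is $0$, and thus by Lemma~\ref{lemmaSubaddSupremum} $\mu^+=\mu_{\{\mu,0\}}$ is countably sub-additive, and being also super-additive by Theorem~\ref{theoSupCharges} it is countably additive — a measure. The same argument applies to $\mu^-$. Finally, if one of $\mu^+,\mu^-$ is finite, say $\mu^-$, then from $\mu^+=\mu+\mu^-$ and finiteness of $\mu$ we get $\mu^+$ finite as well; then $\mu$ is a signed charge of finite variation (since $|\mu|=\mu^++\mu^-$ is finite), so Theorem~\ref{teoJordanSemirings} applies verbatim, giving uniqueness, the lattice-theoretic characterization $\mu^+\land\mu^-=0$, and the measure statement in that regime.

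I expect the main obstacle to be bookkeeping with the extended-real arithmetic: one must consistently track that $\mu(C)$ and $\mu(A)$ are finite (so rearrangements like $\max\{\mu(C),0\}-\max\{-\mu(C),0\}=\mu(C)$ are valid) while $\mu^+(A)$ and $\mu^-(A)$ may legitimately be $+\infty$, so the two displayed inequalities must be phrased so that the $+\infty$ cases are handled (they are automatically, since each inequality has the potentially-infinite quantity on the larger side). There is no deep difficulty; the content is entirely in choosing the right cell-wise algebraic identity and pushing the supremum over partitions through it.
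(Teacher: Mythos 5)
Your proof is correct and takes essentially the same route as the paper: the paper's key step is the lattice identity $\mu^+=\sup\{\mu-\mu,\,0-\mu\}+\mu=\mu^-+\mu$ (translation of the supremum by the finite charge $\mu$), which is precisely the cell-wise rearrangement $\max\{\mu(C),0\}=\mu(C)+\max\{-\mu(C),0\}$ that you push through the partition formula explicitly. The remaining points --- $|\mu|=\mu^++\mu^-$, the measure case via countable sub-additivity (Lemmas \ref{lemmaSubaddSupremum} and \ref{lemmaMeasureSemiRing}), and the final finiteness assertion read off from the first identity --- also match the paper's treatment.
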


\begin{proof}
    The classical lattice proof applies, except for the fact that we cannot subtract $\mu^-$ from $\mu^+$. In fact, since $\mu(A) \in \R$ for each $A$ we have 
    $$
    \mu^+ = \sup \{ \mu, 0 \} = \sup \{ \mu - \mu, -\mu \} + \mu = \mu^- + \mu
    $$
    and 
    $$
    |\mu| = \sup\{ 2\mu,0\}-\mu = 2\mu^+-\mu = \mu^+ + \mu^-.
    $$
    The last assertion follows immediately from the first equality in (\ref{eqJordanImplicito}).
\end{proof}

\begin{example}
   Consider the finite signed charge of example \ref{examChargeOnAlgebraOfNaturals}. For $A$ finite we have
   $$
   \mu^+(A) = \text{card}(A) = \mu(A),\quad \mu^-(A)=0,
   $$
   while $\mu^+(A) = \mu^-(A) = \infty$ for $A^c$ finite. The first identity in (\ref{eqJordanImplicito}) becomes $\infty = -\text{card}(A^c) + \infty$.
\end{example}

We now consider the most general case, in which $\mu$ can take an infinite value. 
\begin{theorem}
\label{teoJordaSignedInfinite}
     Let $\mu$ be a signed charge on a semi-ring $\ca{S}$, with $\mu > -\infty$. The charges $\mu^+ := \mu\lor 0$ and $\mu^- := (-\mu)\lor 0$ satisfy
    \begin{equation}
      \mu^+ = \mu + \mu^-,\quad |\mu| = \mu^+ + \mu^-.
    \end{equation}
    If $\mu$ is a signed measure, then $\mu^+$ and $\mu^-$ are measures.
\end{theorem}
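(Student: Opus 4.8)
The plan is to mimic the proof of Theorem~\ref{teoJordanFiniteSigned}, but to carry out every computation in $(-\infty,+\infty]$ rather than in $\R$, leaning on the explicit construction of Section~\ref{sectionCharges}. First I would observe that, because $\mu>-\infty$, each of the two-element families $\{\mu,0\}$, $\{-\mu,0\}$ and $\{\mu,-\mu\}$ is contained in $c(\ca{S})$ and satisfies the admissibility hypothesis (\ref{eqAdmisible}) --- in the first two cases the constant $0$ already forces $\sup_j\nu_j(A)\geq 0$, and $\mu>-\infty$ does so in the third. Hence Theorem~\ref{theoSupCharges} applies: $\mu^+=\mu\lor 0$, $\mu^-=(-\mu)\lor 0$ and $|\mu|=\mu\lor(-\mu)$ are charges, and
$$
\mu^+(A)=\sup_{\Pi}\sum_{C\in\Pi}\bigl(\mu(C)\lor 0\bigr),\quad
\mu^-(A)=\sup_{\Pi}\sum_{C\in\Pi}\bigl((-\mu(C))\lor 0\bigr),\quad
|\mu|(A)=\sup_{\Pi}\sum_{C\in\Pi}\bigl(\mu(C)\lor(-\mu(C))\bigr),
$$
the suprema running over $\Pi\in\mathscr{P}(A)$. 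I would also record that $\mu^+,\mu^-,|\mu|\geq 0$, so that they avoid $-\infty$ and can only fail to be finite by taking the value $+\infty$, and that for every $t\in(-\infty,+\infty]$ the elementary identities $t\lor 0=t+\bigl((-t)\lor 0\bigr)$ and $t\lor(-t)=(t\lor 0)+\bigl((-t)\lor 0\bigr)$ hold, with $(-t)\lor 0$ always finite.

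For the identity $\mu^+=\mu+\mu^-$: I would fix $A\in\ca{S}$ and a partition $\Pi\in\mathscr{P}(A)$, sum $\mu(C)\lor 0=\mu(C)+\bigl((-\mu(C))\lor 0\bigr)$ over $C\in\Pi$, and combine finite additivity, $\sum_{C\in\Pi}\mu(C)=\mu(A)$, with the finiteness of $\sum_{C\in\Pi}\bigl((-\mu(C))\lor 0\bigr)$ to obtain $\sum_{C\in\Pi}\bigl(\mu(C)\lor 0\bigr)=\mu(A)+\sum_{C\in\Pi}\bigl((-\mu(C))\lor 0\bigr)$. Taking the supremum over $\Pi$ then yields $\mu^+(A)=\mu(A)+\mu^-(A)$ whenever $\mu(A)\in\R$; and when $\mu(A)=+\infty$ both sides are $+\infty$ (the left because the trivial partition $\{A\}$ already contributes $\mu(A)\lor 0=+\infty$, the right because $\mu^-(A)\geq 0$).

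For the identity $|\mu|=\mu^++\mu^-$: the key is that the functionals $f(\Pi):=\sum_{C\in\Pi}(\mu(C)\lor 0)$ and $g(\Pi):=\sum_{C\in\Pi}((-\mu(C))\lor 0)$ are nondecreasing under refinement of partitions --- this is the subadditivity of $t\mapsto t\lor 0$ and $t\mapsto(-t)\lor 0$ applied blockwise --- while $\mathscr{P}(A)$ is directed under refinement, the common refinement of two partitions being obtained by intersecting their blocks (which stay in $\ca{S}$). Using $\mu(C)\lor(-\mu(C))=(\mu(C)\lor 0)+((-\mu(C))\lor 0)$ we get $|\mu|(A)=\sup_\Pi\bigl(f(\Pi)+g(\Pi)\bigr)$, and by passing to a common refinement of partitions that are near-optimal for $f$ and for $g$ separately, monotonicity gives $\sup_\Pi(f+g)\geq\sup_\Pi f+\sup_\Pi g=\mu^+(A)+\mu^-(A)$; the reverse inequality is immediate.

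For the measure assertion I would avoid the identities above entirely: if $\mu$ is a signed measure then $\mu$, $-\mu$ and $0$ are all countably sub-additive, so Lemma~\ref{lemmaSubaddSupremum} (applied to $\{\mu,0\}$ and to $\{-\mu,0\}$) makes $\mu^+$ and $\mu^-$ countably sub-additive; being charges, they are measures by Lemma~\ref{lemmaMeasureSemiRing}. The one genuinely delicate point throughout is the bookkeeping with $+\infty$: one has to check that no rearrangement of finite sums produces an $\infty-\infty$ (which is exactly why it matters that $(-t)\lor 0$ stays finite for $t>-\infty$) and that the passage to the supremum is split according to whether $\mu(A)$ is finite. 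Beyond that, the whole argument is the classical lattice computation transcribed into $(-\infty,+\infty]$.
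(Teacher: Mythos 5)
Your proof is correct, and it takes a different route from the paper's. The paper handles this theorem by localization: for $A$ with $\mu(A)$ finite it restricts $\mu$ to the relative semi-ring $\ca{S}_A=\{A\cap B:B\in\ca{S}\}$, where $\mu$ becomes a finite signed charge, and then simply invokes Theorem~\ref{teoJordanFiniteSigned} (whose proof is the abstract lattice computation $\sup\{\mu,0\}=\sup\{\mu-\mu,-\mu\}+\mu$ and $|\mu|=\sup\{2\mu,0\}-\mu$); for $\mu(A)=\infty$ it notes there is nothing to prove. You instead recompute both identities from scratch out of the explicit partition formula of Section~\ref{sectionCharges}, using the pointwise identities $t\lor 0=t+((-t)\lor 0)$ and $t\lor(-t)=(t\lor 0)+((-t)\lor 0)$ summed over a partition, plus the directedness of $\mathscr{P}(A)$ under refinement and the monotonicity of $\Pi\mapsto\sum_{C\in\Pi}(\mu(C)\lor 0)$ under refinement to interchange the supremum with the sum in $|\mu|=\mu^++\mu^-$. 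The paper's argument is shorter because it reuses the finite case; yours is more self-contained and makes the extended-arithmetic bookkeeping explicit (in particular the crucial fact that $(-t)\lor 0$ stays finite when $t>-\infty$, so no $\infty-\infty$ ever occurs), and your refinement argument is a clean substitute for the lattice identities. Your treatment of the countable-additivity assertion via Lemma~\ref{lemmaSubaddSupremum} applied to $\{\mu,0\}$ and $\{-\mu,0\}$ followed by Lemma~\ref{lemmaMeasureSemiRing} is also sound (and works because $\mu^\pm\geq 0$), whereas the paper leaves this to the earlier theorems; both are legitimate.
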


\begin{proof}
    If $\mu(A)$ is finite, it is clear that $\mu(C)$ is finite for any $C\subseteq A$. Now the family
    $$
    \ca{S}_A := \{ A\cap B:B\in \ca{S} \}
    $$
    is a semiring and $\mu_A := \left. \mu \right|_{\ca{S}_A}$ is a finite signed charge on $\ca{S}_A$. By Theorem \ref{teoJordanFiniteSigned} it follows that
    $$
    \mu^+(A) = \mu(A) + \mu^-(A), \quad |\mu|(A) = \mu^+(A) + \mu^-(A).
    $$
    If $\mu(A) = \infty$, there is nothing to prove.
\end{proof}

\begin{corollary}
     Let $\mu$ be a signed charge on a semiring $\ca{S}$, with $\mu < \infty$. The charges $\mu^+ := \mu\lor 0$ and $\mu^- := (-\mu)\lor 0$ satisfy
    \begin{equation}
    \label{eqJordanNonInfinite}
      \mu^+ - \mu = \mu^-,\quad |\mu| = \mu^+ + \mu^-.
    \end{equation}
    If $\mu$ is a signed measure, then $\mu^+$ and $\mu^-$ are measures.
\end{corollary}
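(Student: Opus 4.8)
The plan is to deduce this corollary directly from Theorem \ref{teoJordaSignedInfinite} by a sign-change argument, exactly as the infimum was obtained from the supremum at the end of Section \ref{sectionMeasures}. First I would set $\tilde{\mu} := -\mu$. The hypothesis $\mu < \infty$ translates into $\tilde{\mu} > -\infty$, so Theorem \ref{teoJordaSignedInfinite} applies to $\tilde{\mu}$ and yields $\tilde{\mu}^+ = \tilde{\mu} + \tilde{\mu}^-$ and $|\tilde{\mu}| = \tilde{\mu}^+ + \tilde{\mu}^-$, where $\tilde{\mu}^+ := \tilde{\mu}\lor 0$ and $\tilde{\mu}^- := (-\tilde{\mu})\lor 0$.

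Next I would identify these objects with the ones in the statement. Since $\tilde{\mu} = -\mu$, we have $\tilde{\mu}^+ = (-\mu)\lor 0 = \mu^-$ and $\tilde{\mu}^- = \mu\lor 0 = \mu^+$. Also $|\tilde{\mu}| = |{-\mu}| = |\mu|$, which is immediate from the definition $|\nu| = \sup\{-\nu,\nu\}$. Substituting into the two identities from Theorem \ref{teoJordaSignedInfinite} gives $\mu^- = -\mu + \mu^+$, i.e. $\mu^+ - \mu = \mu^-$, and $|\mu| = \mu^- + \mu^+ = \mu^+ + \mu^-$, which is (\ref{eqJordanNonInfinite}). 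For the measure statement, if $\mu$ is a signed measure then so is $\tilde{\mu} = -\mu$ (countable additivity is preserved under negation, and it still avoids $-\infty$), so by Theorem \ref{teoJordaSignedInfinite} both $\tilde{\mu}^+$ and $\tilde{\mu}^-$ are measures; these are precisely $\mu^-$ and $\mu^+$.

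There is essentially no obstacle here: the only point requiring a word of care is that one must check the algebraic rearrangement $\mu^- = -\mu + \mu^+ \Leftrightarrow \mu^+ - \mu = \mu^-$ is legitimate pointwise even in the presence of infinite values. This is fine because under $\mu < \infty$ the value $\mu(A)$ is either finite or $-\infty$; in the finite case ordinary arithmetic applies, and in the case $\mu(A) = -\infty$ one has $\mu^+(A) = 0$ and $\mu^-(A) = \infty$, so both sides of $\mu^+ - \mu = \mu^-$ read $\infty$, and the identity $|\mu|(A) = \infty = 0 + \infty$ also holds. Hence the corollary follows, and I would simply write: \emph{Apply Theorem \ref{teoJordaSignedInfinite} to $-\mu$ and note that $(-\mu)^+ = \mu^-$, $(-\mu)^- = \mu^+$ and $|{-\mu}| = |\mu|$.}
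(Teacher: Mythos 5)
Your proposal is correct and coincides with the paper's own proof, which likewise just applies Theorem \ref{teoJordaSignedInfinite} to $-\mu$ and identifies $(-\mu)^+ = \mu^-$, $(-\mu)^- = \mu^+$, $|{-\mu}| = |\mu|$. Your extra check that the rearrangement $\mu^- = -\mu + \mu^+ \Leftrightarrow \mu^+ - \mu = \mu^-$ survives the value $\mu(A) = -\infty$ is a sensible (if implicit in the paper) precaution.
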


\begin{proof}
   It is enough to apply Theorem \ref{teoJordaSignedInfinite} to $-\mu$.
\end{proof}

\begin{example}
   Let's modify example \ref{examChargeOnAlgebraOfNaturals}. Define
$$
\begin{cases}
\text{card}(A) & \mbox{if }A\text{ is finite} \\
- \infty & \mbox{otherwise}. 
\end{cases}
$$
In this case, $\mu$ is a signed charge with $\mu < \infty$. For $A$ finite we still have $\mu^+(A) = \text{card}(A) = \mu(A)$ and $\mu^-(A)=0$. For $A^c$ finite we now have
$$
\mu^+(A) = \infty = \mu^-(A) = \infty.
$$
The first identity in (\ref{eqJordanNonInfinite}) becomes $\infty - (-\infty) = \infty$ is this last case.
\end{example}

The following lemma generalizes the result of Theorem $2.5.3(7)$ in \cite{Rao1983Rao}.

\begin{lemma}
     Let $\mu$ be a signed charge on a semi-ring $\ca{S}$, and let $A\in \ca{S}$. Then $(\mu^+\land \mu^-)(A)= 0$ if and only if, either $\mu^+(A)$ or $\mu^-(A)$ is finite.
\end{lemma}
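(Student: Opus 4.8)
The plan is to reduce to the case $\mu>-\infty$ and then compute $(\mu^+\land\mu^-)(A)$ in closed form. First I would observe that replacing $\mu$ by $-\mu$ interchanges $\mu^+$ and $\mu^-$, leaves the set function $\mu^+\land\mu^-$ and the statement of the lemma unchanged, and sends a charge avoiding $+\infty$ to one avoiding $-\infty$; so there is no loss of generality in assuming $\mu>-\infty$. Under this hypothesis $\mu(C)\in(-\infty,\infty]$ for every $C\in\ca S$, the set functions $\mu^+$ and $\mu^-$ are (finitely additive) charges by Theorem \ref{theoSupCharges} applied to $\{\mu,0\}$ and to $\{-\mu,0\}$, and $\mu^+=\mu+\mu^-$ by Theorem \ref{teoJordaSignedInfinite}. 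I also record that, directly from the definition of $\mu_{\{-\mu,0\}}$, one has $\mu^-(B)=\sup_{\Pi\in\mathscr P(B)}\sum_{C\in\Pi}\max(-\mu(C),0)$ for every $B\in\ca S$.

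The computational heart of the argument is the pointwise identity
$$
\min\bigl(\mu^+(C),\mu^-(C)\bigr)=\mu^-(C)-\max\bigl(-\mu(C),0\bigr),\qquad C\in\ca S,
$$
which I would verify by cases on the sign of $\mu(C)$ (the value $\mu(C)=\infty$ falling under $\mu(C)\ge 0$), substituting $\mu^+(C)=\mu(C)+\mu^-(C)$ each time; note the right-hand side is an unambiguous difference because $\max(-\mu(C),0)$ is always finite, and the identity holds even when $\mu^-(C)=\infty$. Summing over a finite partition $\Pi\in\mathscr P(A)$ and using finite additivity of $\mu^-$ gives
$$
\sum_{C\in\Pi}\min\bigl(\mu^+(C),\mu^-(C)\bigr)=\mu^-(A)-\sum_{C\in\Pi}\max\bigl(-\mu(C),0\bigr),
$$
which remains valid when $\mu^-(A)=\infty$ (both sides are then $+\infty$). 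Since $(\mu^+\land\mu^-)(A)$ is the infimum over $\Pi\in\mathscr P(A)$ of the left-hand sides, and since $\sup_{\Pi}\sum_{C\in\Pi}\max(-\mu(C),0)=\mu^-(A)$, I conclude $(\mu^+\land\mu^-)(A)=0$ if $\mu^-(A)<\infty$ and $(\mu^+\land\mu^-)(A)=+\infty$ if $\mu^-(A)=\infty$.

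It remains to identify the condition $\mu^-(A)<\infty$ with ``$\mu^+(A)$ or $\mu^-(A)$ is finite.'' Only the implication $\mu^+(A)<\infty\Rightarrow\mu^-(A)<\infty$ needs a remark, and it is immediate: if $\mu^-(A)=\infty$ then $\mu^+(A)=\mu(A)+\mu^-(A)=\infty$ because $\mu(A)>-\infty$. Hence $(\mu^+\land\mu^-)(A)=0$ if and only if $\mu^-(A)<\infty$, if and only if at least one of $\mu^+(A),\mu^-(A)$ is finite, which is the assertion.

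The step I expect to demand the most care is the bookkeeping with the value $+\infty$: one must keep every difference of the form ``(arbitrary) $-$ (finite)'' so that no $\infty-\infty$ occurs, and check that the partition-sum identity above persists when $\mu^-(A)=\infty$, in which case one or more of the summands $\mu^-(C)-\max(-\mu(C),0)$ equals $+\infty$. Once the reduction $\mu>-\infty$ is in place these verifications are routine, and the only external inputs are Theorems \ref{theoSupCharges} and \ref{teoJordaSignedInfinite}.
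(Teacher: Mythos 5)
Your proof is correct and follows essentially the same route as the paper: the reduction to $\mu>-\infty$ via $\mu\mapsto-\mu$, the use of $\mu^+=\mu+\mu^-$ from Theorem \ref{teoJordaSignedInfinite}, and the key identity $\min\{\mu^+(C),\mu^-(C)\}=\min\{\mu(C),0\}+\mu^-(C)$ are all exactly the paper's ingredients. The only (harmless) difference is organizational: you run the single partition-sum computation to get the closed form $(\mu^+\land\mu^-)(A)\in\{0,\infty\}$ according to whether $\mu^-(A)$ is finite, covering both directions at once, whereas the paper handles the ``if'' direction separately by restricting to the semi-ring $\ca{S}_A$ and invoking the translation identity $\mu^+\land\mu^-=\mu\land 0+\mu^-$.
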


\begin{proof}
    Suppose $\mu^-(A)<\infty$. As in the proof of Theorem \ref{teoJordaSignedInfinite}, on $(A,\ca{S}_A)$ we have
    $$
    \mu^+\land \mu^- = (\mu^+-\mu^-)\land 0 + \mu^- = \mu \land 0 + \mu^- = 0.
    $$
    In case $\mu^+(A)<\infty$, apply this to $-\mu$. We have proven the ``if'' part. \medskip

    For the ``only if'' let's assume $\mu^+(A) = \mu^-(A)=\infty$. In case $\mu>-\infty$ we have, for any $\Pi\in \mathscr{P}(A)$
    \begin{eqnarray*}
     \sum_{C\in\Pi} \min\{ \mu^+(C),\mu^-(C)\} &=& \sum_{C\in\Pi} \min\{ \mu(C) + \mu^-(C),\mu^-(C)\} \\
     &=& \sum_{C\in\Pi} \left( \min\{ \mu(C),0\} + \mu^-(C) \right) \\
    &=& \sum_{C\in\Pi} \min\{ \mu(C),0\} + \mu^-(A) = \infty.
    \end{eqnarray*}
   We have used Theorem \ref{teoJordaSignedInfinite}. Therefore $(\mu^+\land \mu^-)(A)=\infty$. When $\mu < \infty$, apply this to $-\mu$. 
\end{proof}

\begin{remark}
In the above argument it is actually proved that
    $$
     (\mu^+\land \mu^-)(A) = \left\{  \begin{array}{ll} 0 & \text{ if }\ \mu^+(A) < \infty \text{ or } \mu^-(A)<\infty \\
      \infty & \text{ if }\ \mu^+(A) = \mu^-(A) = \infty.
      \end{array} \right.
      $$
\end{remark}

\begin{example}
    In Example \ref{examChargeOnAlgebraOfNaturals}, for $A$ finite we have 
    $(\mu^+\land \mu^-)(A) = 0$, while $(\mu^+\land \mu^-)(A)=\infty$ when $A^c$ is finite.
\end{example}

\section{A Hahn decomposition for signed charges on semi-rings}\label{sectHahnDecomposition}

Consider the ring $\ca{R}$ generated by $\ca{S}$, that is, the family whose elements are finite unions of elements of $\ca{S}$.  Since $\ca{S}$ is a semi-ring, it is equivalent to consider finite disjoint unions. Given a signed charge $\mu$ defined on $\ca{S}$, denote by $\widehat{\mu}$ its unique extension to $\ca{R}$, which is naturally defined by
$$
\widehat{\mu}\left(\bigcup_{i=1}^n A_i \right) := \sum_{i=1}^n \mu(A_i),
$$
for pairwise disjoint $A_1,\ldots,A_n\in \ca{S}$. It is well known that, if $\mu$ is a signed measure, then so is $\widehat{\mu}$. 

\begin{lemma}
    Let $(\nu_j)_{j\in J}$ be a family of signed charges defined on $\ca{S}$ and satisfying the condition (\ref{eqAdmisible}). Let $\nu := \sup \nu_j$ (as a signed charge on $\ca{S}$) and $\mu:= \sup_{j\in J} \widehat{\nu}_j$ (as a signed charge on $\ca{R}$). Then $\mu = \widehat{\nu}$. In particular, for any signed charge on $\ca{S}$ we have $(\widehat{\mu})^+ = \widehat{\mu^+}$ and $(\widehat{\mu})^- = \widehat{\mu^-}$.
\end{lemma}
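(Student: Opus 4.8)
The plan is to prove directly that $\widehat{\nu}$ is the least upper bound of the family $(\widehat{\nu}_j)_{j\in J}$ inside $c(\ca{R})$; this establishes simultaneously that the supremum defining $\mu$ exists in $c(\ca{R})$ and that $\mu=\widehat{\nu}$, and it avoids having to check the admissibility hypothesis (\ref{eqAdmisible}) for $(\widehat{\nu}_j)$ on $\ca{R}$ or to compare explicit formulas. As preliminaries I would record that, by Theorem \ref{theoSupCharges}, $\nu=\sup_j\nu_j=\mu_F$ lies in $c(\ca{S})$ and $\nu>-\infty$, so its extension $\widehat{\nu}$ is a well-defined signed charge on the ring $\ca{R}$ with $\widehat{\nu}>-\infty$, and $\widehat{\nu}\left(\bigcup_i A_i\right)=\sum_i\nu(A_i)$ for pairwise disjoint $A_1,\dots,A_n\in\ca{S}$ (a sum of elements of $(-\infty,\infty]$, hence unambiguous).

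First I would verify that $\widehat{\nu}$ is an upper bound: since $\nu\ge\nu_j$ on $\ca{S}$ for each $j$, summing over a disjoint $\ca{S}$-decomposition of an arbitrary $R\in\ca{R}$ gives $\widehat{\nu}(R)=\sum_i\nu(A_i)\ge\sum_i\nu_j(A_i)=\widehat{\nu}_j(R)$, so $\widehat{\nu}\ge\widehat{\nu}_j$ on $\ca{R}$. Next I would show it is the least one. Let $\lambda\in c(\ca{R})$ satisfy $\lambda\ge\widehat{\nu}_j$ for all $j$; it suffices to prove $\lambda(A)\ge\nu(A)$ for every $A\in\ca{S}$, because then, writing a general $R\in\ca{R}$ as a finite disjoint union $A_1\cup\dots\cup A_n$ of elements of $\ca{S}$ and using finite additivity of $\lambda$ on the ring $\ca{R}$, one gets $\lambda(R)=\sum_i\lambda(A_i)\ge\sum_i\nu(A_i)=\widehat{\nu}(R)$, the sums being unambiguous since $\lambda(A_i)\ge\sup_j\nu_j(A_i)>-\infty$ by (\ref{eqAdmisible}). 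To obtain $\lambda(A)\ge\nu(A)$, fix $A\in\ca{S}$ and $\Pi\in\mathscr{P}(A)$; its members lie in $\ca{S}\subseteq\ca{R}$, are disjoint, and have union $A\in\ca{R}$, so finite additivity of $\lambda$ and $\widehat{\nu}_j|_{\ca{S}}=\nu_j$ give
$$
\lambda(A)=\sum_{C\in\Pi}\lambda(C)\ \ge\ \sum_{C\in\Pi}\sup_{j\in J}\widehat{\nu}_j(C)=\sum_{C\in\Pi}\sup_{j\in J}\nu_j(C).
$$
Taking the supremum over $\Pi\in\mathscr{P}(A)$ and invoking the formula $\nu(A)=\mu_F(A)=\sup_{\Pi\in\mathscr{P}(A)}\sum_{C\in\Pi}\sup_{j}\nu_j(C)$ yields $\lambda(A)\ge\nu(A)$. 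Together with the previous step this proves $\mu=\sup_j\widehat{\nu}_j=\widehat{\nu}$.

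For the final ``in particular'' claim I would apply the identity just proved to the two-element families $F=\{\mu,0\}$ and $F=\{-\mu,0\}$, each of which satisfies (\ref{eqAdmisible}) because the zero charge belongs to it, together with the obvious facts $\widehat{0}=0$ and $\widehat{-\mu}=-\widehat{\mu}$: the first family gives $(\widehat{\mu})^+=\widehat{\mu}\lor 0=\widehat{\mu\lor 0}=\widehat{\mu^+}$, and the second gives $(\widehat{\mu})^-=(-\widehat{\mu})\lor 0=\widehat{(-\mu)\lor 0}=\widehat{\mu^-}$.

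I expect the only delicate point to be the extended-real-value bookkeeping: ensuring that $\widehat{\nu}$, $\widehat{-\mu}$, and all the finite sums above are well defined, which rests on $\nu$ and $\lambda$ being $>-\infty$ on $\ca{S}$ — itself a consequence of the admissibility hypothesis — and resisting the temptation to assume that the supremum $\mu$ on $\ca{R}$ is given by the explicit construction $\mu_F$ (for which admissibility of $(\widehat{\nu}_j)$ on $\ca{R}$ is not automatic), which is precisely why the argument verifies the universal property of the least upper bound by hand.
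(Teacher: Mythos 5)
Your proof is correct, but it takes a different route from the paper's. The paper works entirely with the explicit partition formulas: it fixes $A\in\ca{S}$, notes that $\mathscr{P}_{\ca{S}}(A)\subseteq\mathscr{P}_{\ca{R}}(A)$ to get $\nu(A)\le\mu(A)$, and then refines an arbitrary $\ca{R}$-partition of $A$ into an $\ca{S}$-partition (using super-additivity of $C\mapsto\sup_j\nu_j(C)$ under refinement) to get the reverse inequality, concluding $\mu=\nu$ on $\ca{S}$ and hence $\mu=\widehat{\nu}$ by uniqueness of the extension to $\ca{R}$. You instead bypass the explicit construction of $\mu$ on $\ca{R}$ altogether and verify the universal property: $\widehat{\nu}$ dominates every $\widehat{\nu}_j$, and any $\lambda\in c(\ca{R})$ dominating all $\widehat{\nu}_j$ dominates $\widehat{\nu}$, the latter reduced to $\ca{S}$ via finite additivity of $\lambda$ and the partition formula for $\nu=\mu_F$. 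What your approach buys is precisely the point you flag at the end: the paper's argument tacitly treats $\sup_j\widehat{\nu}_j$ on $\ca{R}$ via the formula $\sup_{\Pi\in\mathscr{P}_{\ca{R}}}\sum_C\sup_j\widehat{\nu}_j(C)$, which presupposes that the extended family satisfies (\ref{eqAdmisible}) on $\ca{R}$ — not an automatic consequence of admissibility on $\ca{S}$ when individual $\nu_j$ may take the value $-\infty$ on different pieces — whereas your order-theoretic argument needs no such verification and establishes existence of the least upper bound in $c(\ca{R})$ as part of the conclusion. The paper's version is shorter and yields the extra information that the explicit $\ca{R}$-partition formula computes the same object; your version is the cleaner proof of the statement as literally phrased (with $\sup$ meaning least upper bound), and your treatment of the ``in particular'' claim via the families $\{\mu,0\}$ and $\{-\mu,0\}$ correctly supplies the detail the paper leaves implicit.
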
 

\begin{proof}
    Let us fix $A\in \ca{S}$ and denote by $\mathscr{P}_{\ca{S}}$ the family of partitions of $A$ by elements of $\ca{S}$, and similarly for $\mathscr{P}_{\ca{R}}$. It is clear that $\mathscr{P}_{\ca{S}}\subseteq \mathscr{P}_{\ca{R}}$, so $\nu(A) \leq \mu(A)$. On the other hand, given $\Pi\in \mathscr{P}_{\ca{R}}$, each $C\in\Pi$ is a finite disjoint union of elements of $\ca{S}$, so there is a partition $\widehat{\Pi}\in \mathscr{P}_{\ca{S}}$ which is thinner than $\Pi$. Consequently
    $$
    \sum_{C\in \Pi} \sup_{j\in J} \widehat{\nu}_j(C) \leq \sum_{C\in \widehat{\Pi}} \sup_{j\in J} \widehat{\nu}_j(C) = \sum_{C\in \widehat{\Pi}} \sup_{j\in J} \nu_j(C)\leq \nu(A)
    $$
    and this implies $\mu(A) \leq \nu(A)$. We conclude that $\mu = \nu$ on $\ca{S}$, so $\mu = \widehat{\nu}$.
\end{proof}

Because of this lemma, we shall assume that signed charges (and in particular, signed measures) given on $\ca{S}$ are already defined on $\ca{R}$, so there is no need to use the notation $\widehat{\mu}$ any more.

We generalize definition $2.6.1$ of \cite{Rao1983Rao} to this context.

\begin{definition}
    Let $\ca{S}$ be a semi-ring of subsets of $\Omega$ and $\mu$ a signed charge defined on $\ca{S}$, and then on the ring $\ca{R}$ generated by $\ca{S}$. For $\ep>0$, a partition 
    $\{H,A\setminus H\}$ of $A\in \ca{R}$, where $H\in \ca{R}$, is called an $\ep$-Hahn decomposition for $\mu$ on $A$ if
    \begin{eqnarray*}
          \text{For } B\in\ca{R}, B\subseteq H \Rightarrow \mu(B) \leq \ep \\
    \text{For } D\in\ca{R}, D \subseteq A\setminus H \Rightarrow \mu(D) \geq -\ep.
    \end{eqnarray*}
\end{definition}

The following theorem generalizes Theorem on \cite{Rao1983Rao}, to the semi-ring case.

\begin{theorem}
    If either $\mu^+(A)<\infty$ or $\mu^-(A)<\infty$, then there exists an $\ep$-Hahn decomposition for $\mu$ on $A$. Otherwise ($\mu^+(A)=\mu^-(A)=\infty$), there exists $\ep>0$ such that no $\ep$-Hahn decomposition exists for $\mu$ on $A$.
\end{theorem}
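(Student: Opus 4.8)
The plan is to split into the two cases dictated by the statement and treat them with complementary strategies. For the first case, assume without loss of generality that $\mu^-(A)<\infty$ (the case $\mu^+(A)<\infty$ follows by applying the result to $-\mu$). The idea is to produce $H$ as a near-maximizer of $\mu$ over subsets of $A$: since $\mu^-(A)<\infty$, the quantity $\mu(B)$ is bounded above for $B\in\ca{R}$, $B\subseteq A$ (indeed $\mu(B)=\mu^+(B)-\mu^-(B)\le\mu^+(A)$, and from Theorem~\ref{teoJordaSignedInfinite} applied to $-\mu$, finiteness of $\mu^-(A)$ forces $\mu^+(A)$ to be finite as well, via $\mu^+ = \mu+\mu^-$ evaluated on $A$). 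So set $s:=\sup\{\mu(B):B\in\ca{R},\,B\subseteq A\}<\infty$ and pick $H\in\ca{R}$, $H\subseteq A$, with $\mu(H)>s-\ep$. Then for $B\subseteq H$ we get $\mu(B)=\mu(H)-\mu(H\setminus B)\ge \mu(H)-s> -\ep$, wait—I need the opposite inequality, so instead: for $B\subseteq H$, $\mu(B)=\mu(H)-\mu(H\setminus B)$ is not directly what I want; rather consider $B\subseteq H$ and note $\mu(H\setminus B)\le s$, so $\mu(B)=\mu(H)-\mu(H\setminus B)\ge\mu(H)-s>-\ep$; and for $D\subseteq A\setminus H$, $\mu(H\cup D)\le s$, so $\mu(D)=\mu(H\cup D)-\mu(H)\le s-\mu(H)<\ep$. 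This gives the two bounds with the roles as required once one is careful about signs—so one must actually choose $H$ to be a near-maximizer and then the bound on subsets of $H$ is $\mu(B)\le$ something and on subsets of the complement is $\ge -\ep$; I will arrange the inequalities so that subsets of $H$ satisfy $\mu(B)\le\ep$ by instead taking $H$ near-maximizing and using $\mu(B) = \mu(H) - \mu(H\setminus B)$, hmm. The cleanest route: take $H$ with $\mu(H) > s-\ep$; for $D\subseteq A\setminus H$ in $\ca{R}$, $s\ge\mu(H\cup D)=\mu(H)+\mu(D)$ gives $\mu(D)\le s-\mu(H)<\ep$; for $B\subseteq H$, apply the same reasoning to the set function and complement within $H$: $s\ge\mu((A\setminus H)\cup B) $ is not bounded that way—so instead swap the decomposition, letting $H$ be the set where $\mu$ is small, i.e. take $H':=A\setminus H$ as the "positive-ish" part. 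I will organize the final write-up around a single near-maximizing $H_0$ and then declare the $\ep$-Hahn pair to be $\{A\setminus H_0,\,H_0\}$ in the order that makes both displayed inequalities come out correct.

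For the second case, assume $\mu^+(A)=\mu^-(A)=\infty$. Here the goal is to exhibit a fixed $\ep>0$, say $\ep=1$, such that no $1$-Hahn decomposition exists. Suppose for contradiction that $\{H,A\setminus H\}$ were a $1$-Hahn decomposition. Then every $B\subseteq H$ in $\ca{R}$ has $\mu(B)\le 1$, which (taking the supremum over $B\subseteq H$ and invoking the formula for $\mu^+$ as a supremum over partitions, or more directly the definition $\mu^+ = \mu\lor 0$ together with Theorem~\ref{teoSupCharges}) forces $\mu^+(H)\le 1<\infty$; likewise every $D\subseteq A\setminus H$ has $\mu(D)\ge-1$, forcing $\mu^-(A\setminus H)\le 1<\infty$. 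But then, using additivity of $\mu^+$ and $\mu^-$ (they are charges, being suprema of charges by Theorem~\ref{teoSupCharges}) together with the remark that $(\mu^+\land\mu^-)(C)<\infty$ whenever one of $\mu^+(C),\mu^-(C)$ is finite, I can control both $\mu^+(A)$ and $\mu^-(A)$: on $H$ we have $\mu^+(H)<\infty$; on $A\setminus H$ we have $\mu^-(A\setminus H)<\infty$ hence $\mu^+(A\setminus H)=\mu(A\setminus H)+\mu^-(A\setminus H)$ is finite provided $\mu(A\setminus H)$ is finite, and similarly for $\mu^-(H)$. The subtle point is that $\mu(H)$ or $\mu(A\setminus H)$ could be $\pm\infty$; but the hypothesis rules out one sign globally is not assumed here, so I will need the Lemma/Remark preceding this theorem to argue that $\mu^+(H)<\infty$ actually also gives $\mu^-(H)<\infty$ is false in general—instead I should directly bound $\mu^+(A)=\mu^+(H)+\mu^+(A\setminus H)$ and note the second summand: subsets of $A\setminus H$ have $\mu\ge -1$ but could be large positive, so $\mu^+(A\setminus H)$ need not be finite. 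So the contradiction must come differently: from $\mu(D)\ge-1$ for all $D\subseteq A\setminus H$ we get $\mu^-(A\setminus H)\le 1$, and from $\mu(B)\le 1$ for all $B\subseteq H$ we get $\mu^+(H)\le 1$; now $\mu^-(A)=\mu^-(H)+\mu^-(A\setminus H)$, so $\mu^-(H)=\infty$, whence by the Remark $(\mu^+\land\mu^-)(H)=\infty$, which is impossible since $\mu^+\land\mu^-\le\mu^+$ and $\mu^+(H)\le 1$. That is the contradiction.

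The main obstacle I anticipate is bookkeeping with the possibly-infinite values: getting the direction of every inequality right in the first case (which of $H$ or $A\setminus H$ plays which role), and in the second case making sure the contradiction is derived from genuinely available finiteness — concretely, passing from the pointwise bounds $\mu(B)\le\ep$, $\mu(D)\ge-\ep$ to the statements $\mu^+(H)\le\ep$ and $\mu^-(A\setminus H)\le\ep$ requires the identification of $\mu^+$ with a supremum, i.e. Theorem~\ref{teoSupCharges} (or equivalently the explicit partition formula for $\mu^+ = \mu\lor 0$), and then the final clash uses the Remark characterizing when $\mu^+\land\mu^-$ is $0$ versus $\infty$. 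Once those two ingredients are in place the argument is short; I would present the first case in a few lines and the second as a one-paragraph proof by contradiction.
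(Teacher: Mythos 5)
There are two genuine gaps. In the first case you reduce to $\mu^-(A)<\infty$ and then assert that this forces $\mu^+(A)<\infty$ ``via $\mu^+=\mu+\mu^-$''. That implication is false: $\mu(A)$ may equal $+\infty$ (take $\mu$ an infinite positive measure, so $\mu^-\equiv 0$ while $\mu^+(A)=\infty$), and then $s:=\sup\{\mu(B):B\in\ca{R},\,B\subseteq A\}=+\infty$, so your near-maximizing $H$ does not exist. The fix is to take the \emph{other} reduction: assume $\mu^+(A)<\infty$, which gives $\mu(B)\leq \mu^+(B)\leq\mu^+(A)$ for every $B\in\ca{R}$ with $B\subseteq A$, hence $s<\infty$; one should also note that $\mu$ then takes no value $+\infty$ inside $A$, so the subtraction $\mu(B)=\mu(H)-\mu(H\setminus B)$ is legitimate. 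With that correction (and the role swap $H\mapsto A\setminus H$ you already flag) the exhaustion argument does work, and it is genuinely different from the paper's proof, which uses $(\mu^+\land\mu^-)(A)=0$ to pick a finite partition $\Pi$ with $\sum_{C\in\Pi}\min\{\mu^+(C),\mu^-(C)\}<\ep$ and takes $H$ to be the union of the cells where $\mu^+\leq\mu^-$; that route yields both inequalities at once with no case analysis on which variation is finite.

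In the second case your ingredients are correct ($\mu^+(H)\leq\ep$ and $\mu^-(A\setminus H)\leq\ep$ do follow from the definition of an $\ep$-Hahn decomposition and the supremum formula for $\mu^+$), but the final clash is not: from $\mu^-(H)=\infty$ and $\mu^+(H)\leq 1$ the Remark gives $(\mu^+\land\mu^-)(H)=0$, \emph{not} $\infty$ --- the value $\infty$ occurs only when both variations are infinite on the set in question --- and indeed $\mu^+(H)\leq 1$ together with $\mu^-(H)=\infty$ is not by itself absurd. So no contradiction is reached as written. The contradiction should be extracted at the level of $A$: plugging the two-cell partition $\{H,A\setminus H\}$ into the infimum formula gives $(\mu^+\land\mu^-)(A)\leq\min\{\mu^+(H),\mu^-(H)\}+\min\{\mu^+(A\setminus H),\mu^-(A\setminus H)\}\leq\mu^+(H)+\mu^-(A\setminus H)\leq 2\ep$, while the Remark applied to $A$ (where both variations are infinite) gives $(\mu^+\land\mu^-)(A)=\infty$. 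This is exactly the paper's argument. (Alternatively one can reach a contradiction on $H$ or on $A\setminus H$ via $\mu^+=\mu+\mu^-$ from Theorem \ref{teoJordaSignedInfinite}, splitting according to which infinite value $\mu$ avoids, but that is not what your appeal to the Remark does.)
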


\begin{proof}
    If $\mu^+(A)<\infty$ or $\mu^-(A)<\infty$, by last lemma we have $(\mu^+\land \mu^-)(A) = 0$. Given $\ep>0$, there is $\Pi\in \mathscr{P}(A)$ such that 
    $$
    \sum_{C\in\Pi} \min\{ \mu^+(C),\mu^-(C) \} < \ep.
    $$
    Define $\Pi_1 := \{ C\in\Pi: \mu^+(C) \leq \mu^-(C) \}$, $\Pi_2 := \Pi\setminus\Pi_1$ and
    $$
    H := \bigcup_{C\in\Pi_1}C\quad \text{so that}\quad A\setminus H = \bigcup_{C\in\Pi_2} C.
    $$
    If $B\in \ca{R}$ and $B\subseteq H$ we have
    $$
    \mu(B) \leq \mu^+(B) = \sum_{C\in \Pi_1} \mu^+(B\cap C) \leq \sum_{C\in \Pi_1} \mu^+(C) < \ep
    $$
    while for $D\in\ca{R}$ and $D\subseteq A\setminus H$ we have
     $$
    -\mu(D) \leq \mu^-(D) = \sum_{C\in \Pi_2} \mu^-(D\cap C) \leq \sum_{C\in\Pi_2} \mu^-(C) < \ep.
    $$
    On the other hand, the existence of an $\ep$-Hahn decomposition for $\mu$ on $A$, for every $\ep>0$, implies $(\mu^+\land \mu^-)(A)=0$. In fact, let $\ep>0$ and let $\{H,A\setminus H\}$ be the $\ep$-Hahn decomposition for $\mu$ on $A$. Since $\mu^+(H) \leq\ep$ and $\mu^-(A\setminus H) \leq \ep$ we get 
    $$
    (\mu^+\land\mu^-)(A) \leq 2\ep.
    $$
    By last lemma again, the last assertion follows.
\end{proof}

% \section{Extension theorems}

% \begin{theorem}[Kolgomorov extension theorem]
%     For $t$ in an index set $I$, let $X_t$ be a non empty set and $\mathcal{A}_t$ an algebra of subsets of $X_t$.  Let $\mathcal{A}$ be the algebra of cylinders of the form $ \times $
% \end{theorem}

\section{Applications to signed measures with density}\label{sectApplicationsDensity}

We extend Lemma 2.8 on \cite{VeraarYaroslavtsev:2016} to the case of signed measures.

\begin{theorem}
\label{theoIntegralOfSupremum}
Let $(S,\ca{A},\nu)$ be a $\sigma$-finite measure space. Let $\goth{F}$ be a family of measurable functions from $S$ into $(-\infty,\infty]$, such that 
$$
\int f^{-} d\nu < \infty \textup{ for each } f\in \goth{F}.
$$
Let $(f_j)_{j\in\N}$ be a sequence in $\goth{F}$. Define $\check{f} = \sup_{j\geq 1} f_j $ and assume that $\sup_{f\in \goth{F}} f = \check{f}$. For each $f\in \goth{F}$ let $\mu_f$ be the signed measure defined by
$$
\mu_f(A) =\int_B f d\nu, \quad A\in \ca{A}.
$$
If we define $\check{\mu} := \sup_{f\in \goth{F}} \mu_f$, then $\check{\mu} = \sup_{j\geq 1} \mu_{f_j}$ and
\begin{equation}
\label{eqSupAndDensity}
\check{\mu}(A) = \int_A \check{f} d\nu.
\end{equation}
\end{theorem}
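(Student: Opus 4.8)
The plan is to unwind the constructive definition of the supremum and reduce the whole statement to the single identity $\check{\mu}(A)=\int_A\check{f}\,d\nu$ for every $A\in\ca{A}$. A few preliminaries first. Each $\mu_f$ is a signed measure with $\mu_f>-\infty$, because $\int f^{-}\,d\nu<\infty$; and $\check{f}=\sup_{j}f_j$ is measurable with $\check{f}^{-}\leq f_1^{-}$, so $\lambda:=\bigl(B\mapsto\int_B f_1^{-}\,d\nu\bigr)$ is a finite measure and $\sup_{f\in\goth{F}}\mu_f(A)\geq\mu_{f_1}(A)\geq-\lambda(A)>-\infty$ for all $A$. Hence $\goth{F}$ and the subfamily $(f_j)$ satisfy the admissibility hypothesis (\ref{eqAdmisible}), the set functions $\check{\mu}=\mu_{\goth{F}}$ and $\sup_{j}\mu_{f_j}$ are given by the explicit formula of the definition preceding Theorem \ref{theoSupCharges}, and, since each $\mu_f$ is countably sub-additive and $\mu_{\goth{F}}\geq-\lambda$, Theorem \ref{theoSubaddSupremum} already guarantees that these suprema are signed measures. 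Finally, as $(f_j)_j\subseteq\goth{F}$, a term-by-term comparison of the two formulas gives $\sup_{j}\mu_{f_j}\leq\check{\mu}$ outright; so it suffices to prove $\int_A\check{f}\,d\nu\leq(\sup_j\mu_{f_j})(A)$ and $\check{\mu}(A)\leq\int_A\check{f}\,d\nu$ for every $A$, since the resulting sandwich yields both $\sup_j\mu_{f_j}=\check{\mu}$ and (\ref{eqSupAndDensity}) at once.

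For the upper bound I would fix $A$ and a partition $\Pi\in\mathscr{P}(A)$. The hypothesis $\sup_{f\in\goth{F}}f=\check{f}$ says $f\leq\check{f}$ pointwise for every $f\in\goth{F}$, hence $\mu_f(C)=\int_C f\,d\nu\leq\int_C\check{f}\,d\nu$ and therefore $\sup_{f\in\goth{F}}\mu_f(C)\leq\int_C\check{f}\,d\nu$ for each $C\in\Pi$. Summing over $C\in\Pi$ and using additivity of $B\mapsto\int_B\check{f}\,d\nu$ gives $\sum_{C\in\Pi}\sup_{f}\mu_f(C)\leq\int_A\check{f}\,d\nu$; taking the supremum over all $\Pi$ yields $\check{\mu}(A)\leq\int_A\check{f}\,d\nu$.

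For the lower bound I would work with the finite maxima $g_n:=\max(f_1,\dots,f_n)$, which increase pointwise to $\check{f}$, and with the measurable decomposition of $A$ by the sets on which $f_k$ is the first maximiser,
$$
A_k^{(n)}:=\bigl\{\,x\in A:\ g_n(x)=f_k(x)\ \text{and}\ f_i(x)<f_k(x)\ \text{for all}\ i<k\,\bigr\},\qquad k=1,\dots,n,
$$
so that $g_n=f_k$ on $A_k^{(n)}$. Then $\{A_1^{(n)},\dots,A_n^{(n)}\}\in\mathscr{P}(A)$ and, since $g_n^{-}\leq f_1^{-}$ is integrable (so the integral splits over this finite partition),
$$
\int_A g_n\,d\nu=\sum_{k=1}^n\int_{A_k^{(n)}}f_k\,d\nu=\sum_{k=1}^n\mu_{f_k}\bigl(A_k^{(n)}\bigr)\leq\sum_{k=1}^n\ \sup_{j}\mu_{f_j}\bigl(A_k^{(n)}\bigr)\leq\Bigl(\sup_{j}\mu_{f_j}\Bigr)(A).
$$
Letting $n\to\infty$, a monotone convergence argument --- write $g_n+f_1^{-}\geq0$, apply the classical monotone convergence theorem to this nonnegative increasing sequence, and subtract the finite number $\int_A f_1^{-}\,d\nu$ --- gives $\int_A g_n\,d\nu\uparrow\int_A\check{f}\,d\nu$, hence $\int_A\check{f}\,d\nu\leq(\sup_j\mu_{f_j})(A)$. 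With the upper bound this closes the sandwich $\int_A\check{f}\,d\nu\leq(\sup_j\mu_{f_j})(A)\leq\check{\mu}(A)\leq\int_A\check{f}\,d\nu$ for every $A\in\ca{A}$, which is exactly the claim.

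The delicate step will be this last monotone convergence passage, carried out for functions valued in $(-\infty,\infty]$ whose integrals may be $+\infty$: one has to ensure that every splitting and cancellation of integrals avoids an $\infty-\infty$ indeterminacy, which is precisely where the assumption $\int f^{-}\,d\nu<\infty$ and the dominations $g_n^{-},\check{f}^{-}\leq f_1^{-}$ are used. The remaining ingredients --- measurability of the sets $A_k^{(n)}$, the fact that they genuinely partition $A$, and finite additivity of integrals of functions that may take the value $+\infty$ --- are routine but should be written out with care.
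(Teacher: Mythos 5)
Your proposal is correct, and the reduction to the sandwich $\int_A\check{f}\,d\nu\leq(\sup_j\mu_{f_j})(A)\leq\check{\mu}(A)\leq\int_A\check{f}\,d\nu$ is sound; the upper bound coincides with the paper's, but your lower bound takes a genuinely different route. The paper adapts the Veraar--Yaroslavtsev argument: it splits $A$ according to the sign of $\check{f}$, covers $A\cap\{\check{f}>0\}$ by the countable disjoint sets where $f_{j}$ first exceeds $(\check{f}\land n)(1-\ep)$ (and analogously on $\{\check{f}<0\}$ and $\{\check{f}=0\}$), and then sums using the \emph{countable} additivity of $\check{\mu}$, invoking $\sigma$-finiteness of $\nu$ by localization to handle the set $\{\check{f}=0\}$. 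You instead use the finite maxima $g_n=\max(f_1,\dots,f_n)$ and the ``first maximiser'' sets $A_k^{(n)}$, which form a \emph{finite} partition of $A$ in $\ca{A}$; the inequality $\sum_k\mu_{f_k}(A_k^{(n)})\leq\sum_k\sup_j\mu_{f_j}(A_k^{(n)})\leq(\sup_j\mu_{f_j})(A)$ then follows directly from the defining partition formula of the supremum, and the passage $\int_A g_n\,d\nu\uparrow\int_A\check{f}\,d\nu$ is classical monotone convergence after the shift by the integrable function $f_1^{-}$ (with $g_n+f_1^{-}\geq f_1^{+}\geq 0$ and $\check{f}^{-}\leq f_1^{-}$ guarding against $\infty-\infty$). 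What your approach buys: it needs neither the countable additivity of $\check{\mu}$ nor the $\sigma$-finiteness of $\nu$ nor the truncation parameter $n$ and the $\ep$-slack, and it dispenses with the three-way sign split; the price is that the argument is tied to countable subfamilies through the pointwise maxima, which is harmless here since the hypothesis $\sup_{f\in\goth{F}}f=\check{f}=\sup_j f_j$ is exactly what lets you sandwich the uncountable supremum between the countable one and the integral. Both proofs deliver $\sup_j\mu_{f_j}=\check{\mu}$ as a byproduct of the same final comparison.
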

\begin{proof}
Clearly $\check{f}$ is measurable, so $A \mapsto \int_A \check{f} d\nu$ defines a signed measure that dominates each $\mu_f$. This gives the inequality ``$\leq$'' in \eqref{eqSupAndDensity}. 

For the other inequality, we adapt the proof of Lemma 2.8 in \cite{VeraarYaroslavtsev:2016}, which is established for non-negative functions and measures. Let $A\in \ca{A}$, $n\in\N$ and $0< \ep < 1$. Define
$$
A_1 =\{ s\in A: f_1(s) > (\check{f}(s)\land n)(1-\ep)>0 \}
$$
and
$$
A_{j+1} = \{ s\in A: f_{j+1}(s) > (\check{f}(s)\land n)(1 - \ep)>0 \}\setminus \bigcup_{i=1}^{j} A_i.
$$
The sets $A_j,\, j\geq 1$ are disjoint and cover $A\cap \{ \check{f}>0 \} $, so
$$
\check{\mu}(A\cap \{\check{f}>0\}) = \sum_{j\geq 1} \check{\mu}(A_j) \geq \sum_{j\geq 1} \mu_{f_j}(A_j) = \sum_{j\geq 1} \int_{A_j} f_j d\nu.
$$
It follows that
$$
\check{\mu}(A\cap \{\check{f}>0\}) \geq (1-\ep) \sum_{j\geq 1} \int_{A_j} (\check{f}\land n) d\nu
= (1-\ep)\int_{A\cap \{\check{f}>0 \}} (\check{f}\land n) d\nu.
$$
Since $\ep$ and $n$ are arbitrary, we obtain
$$
\check{\mu}(A\cap \{\check{f}>0\}) \geq \int_{A\cap \{\check{f}>0 \}} \check{f} d\nu.
$$
If we now use
$$
A_1 =\{ s\in A: \check{f}(s)<0, f_1(s)>\check{f}(s)(1+\ep)\}
$$
and
$$
A_{j+1} = \{ s\in A: \check{f}(s) < 0, f_{j+1}(s)>\check{f}(s)(1+\ep)\}\setminus \bigcup_{i=1}^{j} A_i
$$
we get
$$
\check{\mu}(A\cap \{\check{f}<0\}) \geq \int_{A\cap \{\check{f} < 0 \}} \check{f} d\nu.
$$
Now with $A_1 = \{ s\in A: -\ep < f_1(s)\leq 0 \}$ and 
$$
A_{j+1} = \{ s\in A: -\ep < f_{j+1}(s)\leq 0 \} \setminus \bigcup_{i=1}^j A_i
$$
we get 
$$
\check{\mu}(A\cap \{\check{f}=0\}) \geq -\ep \nu(A\cap \{\check{f} = 0\}).
$$
Since $\ep$ is arbitrary and (by localization) $\nu$ may be assumed finite, we get $\check{\mu}(A\cap \{\check{f}=0\}) \geq 0$.

Finally
$$
\check{\mu}(A) = \check{\mu}(A\cap \{\check{f} > 0\}) + \check{\mu}(A\cap \{\check{f}=0\}) + \check{\mu}(A\cap \{\check{f} < 0\}) \geq \int_A \check{f} d\nu.
$$
We have proven \eqref{eqSupAndDensity}. If we replace $\goth{F}$ with $(f_j)$, we obtain 
$$
\sup_{j\geq 1} \mu_{f_j} = \int_A \check{f} d\nu = \check{\mu}
$$  
and this completes the proof.
\end{proof}

\begin{remark}
    In the case the family in countable, the theorem comes down to
    $$
    \left( \sup_{n\in\N} \int_{\cdot} f_n d\nu \right)(A) = \int_{A} \left(\sup_{n\in\N} f_n \right) d\nu. 
    $$
\end{remark}

\begin{corollary}
\label{coroVarSignedMeasDensity}
Consider a signed measure $\mu$ given by
$$
\mu(A) := \int_A f(s)\, d\nu(s),
$$
where $\nu$ is a $\sigma$-finite measure. Then
$$
\mu^+(A) = \int_A f^+(s)\,d\nu(s),\quad
\mu^-(A) = \int_A f^-(s)\,d\nu(s),\quad
\var(\mu)(A) = \int_A \abs{f(s)}\,d\nu(s).
$$
\end{corollary}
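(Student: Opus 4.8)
The plan is to read off all three identities from Theorem~\ref{theoIntegralOfSupremum} combined with the Jordan decomposition of Section~\ref{sectJordanGeneralCase}, working on the $\sigma$-algebra $\ca{A}$ (which is in particular a semi-ring). Two harmless reductions come first. Replacing $(f,\mu)$ by $(-f,-\mu)$ interchanges $\mu^+$ and $\mu^-$ and leaves $\var(\mu)=|\mu|$ untouched; and for $\mu(A)=\int_A f\,d\nu$ to define a signed measure one of $\int f^+\,d\nu$, $\int f^-\,d\nu$ must be finite. So I may assume $\int_S f^-\,d\nu<\infty$, hence $\mu>-\infty$. Since then $\{f=-\infty\}$ is $\nu$-null, I may further replace $f$ by a function with values in $(-\infty,\infty]$ without altering any of the integrals $\int_A f\,d\nu$, $\int_A f^\pm\,d\nu$; thus $f$ and the constant function $0$ satisfy the hypotheses of Theorem~\ref{theoIntegralOfSupremum}.

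For $\mu^+$, apply Theorem~\ref{theoIntegralOfSupremum} with $\goth{F}=\{f,0\}$ and the sequence $f_1=f$, $f_j=0$ for $j\geq 2$. Then $\check f=\sup_{j}f_j=f\lor 0=f^+$ and $\sup_{g\in\goth{F}}g=\check f$, while $\mu_f=\mu$ and $\mu_0=0$, so $\check\mu=\sup_{g\in\goth{F}}\mu_g=\mu\lor 0=\mu^+$. The theorem gives $\mu^+(A)=\int_A\check f\,d\nu=\int_A f^+\,d\nu$ for every $A\in\ca{A}$.

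For $\mu^-=(-\mu)\lor 0$ one cannot simply feed $-f$ into Theorem~\ref{theoIntegralOfSupremum} (that function need not avoid $-\infty$), nor subtract $\mu$ from $\mu^+$ when $\mu(A)=+\infty$; instead I argue directly from the explicit formula of Theorem~\ref{theoSupCharges}, namely $\mu^-(A)=\sup_{\Pi\in\mathscr{P}(A)}\sum_{C\in\Pi}\bigl((-\mu(C))\lor 0\bigr)$. For the upper bound: for each $C\in\ca{A}$ one has $\mu(C)=\int_C f\,d\nu\geq-\int_C f^-\,d\nu$, hence $(-\mu(C))\lor 0\leq\int_C f^-\,d\nu$, and summing over a finite partition $\Pi$ of $A$ gives $\mu^-(A)\leq\int_A f^-\,d\nu$. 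For the matching lower bound, use the two-set partition $\{A\cap\{f<0\},\,A\cap\{f\geq 0\}\}$: on $A\cap\{f\geq 0\}$ the corresponding term is $0$, while on $A\cap\{f<0\}$ it equals $\max\{\int_{A}f^-\,d\nu,0\}=\int_A f^-\,d\nu$, so $\mu^-(A)\geq\int_A f^-\,d\nu$. (Equivalently: on $\{f\geq 0\}$ the restricted charge is nonnegative, so $\mu^-=0$ there, and on $\{f<0\}$ it is finite and $\leq 0$, so Theorem~\ref{teoJordanFiniteSigned} gives $\mu^-=-\mu$ there; then add, using finite additivity of the charge $\mu^-$.)

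Finally, since $\mu>-\infty$, Theorem~\ref{teoJordaSignedInfinite} yields $\var(\mu)=|\mu|=\mu^++\mu^-$, so $\var(\mu)(A)=\int_A f^+\,d\nu+\int_A f^-\,d\nu=\int_A(f^++f^-)\,d\nu=\int_A|f|\,d\nu$, the splitting being legitimate because $f^+,f^-\geq 0$. I expect the only real obstacle to be the treatment of $\mu^-$: the asymmetry of Theorem~\ref{theoIntegralOfSupremum} (densities may reach $+\infty$ but not $-\infty$) rules out a symmetric derivation, and the possibility $\mu(A)=+\infty$ forbids naive cancellation, so the short direct computation above is genuinely needed; everything else is a routine assembly of the quoted results.
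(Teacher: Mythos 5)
Your proof is correct, and for the $\mu^+$ identity it is exactly the intended route: the paper states this corollary without proof as an immediate consequence of Theorem~\ref{theoIntegralOfSupremum}, the implicit argument being to apply that theorem to the pairs $\{f,0\}$, $\{-f,0\}$ and $\{f,-f\}$. Where you genuinely depart from (and improve on) that implicit argument is in the treatment of $\mu^-$ and $\var(\mu)$: you correctly observe that Theorem~\ref{theoIntegralOfSupremum} only admits densities with values in $(-\infty,\infty]$ and with integrable negative part, so once $f$ is merely quasi-integrable (say $\int f^-\,d\nu<\infty$ but $\int f^+\,d\nu=\infty$) the function $-f$ is not an admissible input, and the naive symmetric application fails. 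Your replacement — bounding $\sum_{C\in\Pi}\bigl((-\mu(C))\lor 0\bigr)$ above by $\int_A f^-\,d\nu$ term by term and matching it from below with the two-set partition $\{A\cap\{f<0\},\,A\cap\{f\ge 0\}\}$ — is short, correct, and closes the gap; the final identity $\var(\mu)=\mu^++\mu^-$ via Theorem~\ref{teoJordaSignedInfinite} then finishes the proof. The only cost of your route is length; what it buys is that the corollary is actually established in the full generality in which it is stated (arbitrary signed measures with density, not just $f\in L^1(\nu)$), whereas the one-line deduction from Theorem~\ref{theoIntegralOfSupremum} alone is only licensed when both $\int f^+\,d\nu$ and $\int f^-\,d\nu$ are finite. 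It would be worth recording this hypothesis check explicitly, since the example following the corollary assumes $f$ integrable and therefore does not reveal the issue.
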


% \begin{proof}
% By Theorem \ref{theoIntegralOfSupremum} we have
% $$
% \mu^+(A) = \sup\{\mu,0\}(A) = \int_A \sup\{f(s),0\}\,d\nu(s) = \int_A f^+(s)\,d\nu(s).
% $$
% For $\mu^-$ we proceed similarly. Finally
% $$
% \var(\mu)(A) = \sup \{ \mu,-\mu\}(A) = \int_A \sup\{ f(s),-f(s)\}\,d\nu(s) = \int_A \abs{f(s)}\,d\nu(s)
% $$
% and the proof is complete.
% \end{proof}

\begin{example}
    Consider $f:\R\fle\R$ integrable and define $\alpha$ on $(\R,\ca{B}(\R))$ by
    $$ 
    \alpha(A) = \int_A f(t)dt.
    $$
    By Corollary \ref{coroVarSignedMeasDensity}, the variation of $\alpha$ is given by
    $$
    \var(\alpha)(A) = \int_A \abs{f(t)}dt.
    $$
    When applying this with $A = (a,x]$, we can use Riemann-type partitions in the definition of the supremum (see for instance Remark 2.10 in \cite{VeraarYaroslavtsev:2016}). We obtain a very direct proof of the identity
    $$
    V[F;a,x] = \int_a^x \abs{f(t)}dt,\qquad \textup{for } F(x) := \int_a^x f(t)dt.
    $$
\end{example}

\noindent \textbf{Acknowledgements}  This work was partially supported by The University of Costa Rica through the grant ``C3019- C\'{a}lculo Estoc\'{a}stico en Dimensi\'{o}n Infinita''. 

%The author thank two anonymous referees for valuable comments and suggestions that contributed greatly to improve the presentation of this article. 

%\smallskip

%\noindent \textbf{Data Availability.} Data sharing not applicable to this article as no datasets were generated or analysed during the current study. 

\section*{Declarations}

\noindent \textbf{Conflict of interest} The authors have no conflicts of interest to declare that are relevant to the content of this article.

\bibliographystyle{plain}
\bibliography{rieszlattices}

\end{document}